\documentclass[12pt]{article}
\usepackage{amsmath}
\usepackage{amsthm}
\usepackage{amscd}
\usepackage{amssymb}
\usepackage[all]{xy}
\usepackage[usenames]{color}

\usepackage{latexsym, a4}
\usepackage{graphicx}

\newcommand{\Lie}{\ensuremath{\mathsf{Lie}}}

\newtheorem{theorem}{Theorem}[section]

\newtheorem{corollary}[theorem]{Corollary}

\newtheorem{lemma}[theorem]{Lemma}

\newtheorem{proposition}[theorem]{Proposition}

\theoremstyle{definition}

\newtheorem{definition}[theorem]{Definition}
\newtheorem{remark}[theorem]{Remark}
\newtheorem{example}[theorem]{Example}

\pagestyle{myheadings} \voffset= -.2cm \hoffset = -1.75cm
\textheight=21.5cm \textwidth=15cm

\newcommand{\Ker}{{\sf Ker}}

\title{\bf {\Lie}-isoclinism of pairs of Leibniz algebras}

\author{Z. Riyahi$^{a, }$\footnote{
E-mail: z.riyahi@mazust.ac.ir} ~~ and ~~ J. M. Casas$^{b }$\footnote{E-mail: jmcasas@uvigo.es}\\
{\small $^a$ University of Sciences and Technology of Mazandaran, Behshahr, Iran}\\
{\small $^b$  Dpto. Matem\'atica Aplicada I, Universidade de Vigo, E. E. Forestal,}\\
{\small Campus Universitario A Xunqueira, 36005 Pontevedra, Spain}
\date{}
}

\begin{document}
\maketitle

\begin{abstract}
The aim of this paper is to consider the relation between {\Lie}-isoclinism and isomorphism of two pairs of Leibniz algebras. We show that, unlike the absolute case for finite dimensional Lie algebras, these concepts are not identical, even if the pairs of Leibniz algebras are {\Lie}-stem. Moreover, throughout the paper we provide some conditions under which {\Lie}-isoclinism and isomorphism of {\Lie}-stem Leibniz algebras are equal. In order to get this equality, the concept of factor set is studied as well.\\

\noindent {\it Keywords:} Leibniz algebras, {\Lie}-isoclinic pairs, {\Lie}-stem pair, Factor set.\\
\noindent {\it Mathematics Subject Classification} 2010: 17A32, 18B99.
\end{abstract}

\section{Introduction}
\label{intro}
The isoclinism in group theory, that is an equivalence relation on groups which generalizes isomorphism, was first introduced by Hall \cite{RefH} for the purpose of classifying finite p-groups of small order. This concept was studied by several authors, including  Tappe \cite{RefT} and Weichsel \cite{RefW}.
In 1994,  Moneyhun \cite{RefM} extended this concept to Lie algebras that produces a partition on the class of all Lie algebras into equivalences classes. By this equivalence relation, she showed that the isoclinic family of Lie algebras contains at
least one stem Lie algebra. Also, she proved that the concepts of isoclinism and isomorphism between Lie algebras of the same finite dimension are identical. 
The isoclinism of a pair of Lie algebras was studied by Moghaddam  et al. \cite{RefMP} in $ 2009 $. They generalised the first result of  Moneyhun for the pair of Lie algebras. In addition, it showed that two pairs of finite dimensional stem Lie algebras are isoclinic if and only if they are isomorphic. A passage to the similar to the above, for the central extension of Lie algebras (in \cite{RefMSR}) present, under some conditions, the same result.

In the last decades, a prominent research line consists in the extension of properties from Lie algebras to Leibniz algebras, which are non-anti-commutative versions of Lie algebras \cite{RefL,RefLP}. In more detail, a vector space $ \mathfrak{q} $ equipped with a bilinear map $ [-, -] : \mathfrak{q} \times \mathfrak{q}\rightarrow \mathfrak{q}$ is called Leibniz algebra if satisfying the Leibniz identity: \[[x, [y, z]]=[[x, y], z] - [[x, z], y],~~~~x, y, z\in \mathfrak{q}.\]

The investigations on Leibniz algebras theory show that some results of the theory of Lie algebras can be extended to Leibniz one.
It is of interest to know whether the above mentioned works, in particular, the equivalence between isoclinism and isomorphism in presence of finite dimension, \cite{RefMP,RefM}, are still true for the Leibniz algebras. So the main goal of this paper is to answer this question, for that we focus in the relative framework, that is the context relative to the Liezation functor as we explain below.

In the papers \cite{RefBC,RefCKh} was initiated a study of properties of Leibniz algebras relative to the Liezation functor, which assigns to a Leibniz algebra $\mathfrak{q}$ the Lie algebra $\mathfrak{q}_{\Lie} = \mathfrak{q}/\langle \{[x,x]: x \in \mathfrak{q} \} \rangle$, as opposed to the absolute ones, the corresponding to the abelianization functor. The origin of this point of view comes from the general theory of central extensions relative to a chosen subcategory  of a base category introduced in \cite{RefJK} and considered in the context of semi-abelian categories relative to a Birkhoff subcategory in \cite{RefJMT}.

Continuing with this study, in the first section, we introduce the concept of \textit{{\Lie}-isoclinism} for pairs of Leibniz algebras 
that is an equivalence relation. Similar to the pair of Lie algebras, \cite{RefMP}, we prove that {\Lie}-isoclinic family of Leibniz algebras contains at least one {\Lie}-stem Leibniz algebra which is the smallest dimension and give some results about this concept, as well.

In section $ 3 $, we use a function, named factor set, which is introduced by non-abelian extension of Leibniz algebras. Note that, this function (without indicating on factor set) has given by Liu et al. \cite{RefLSW} to classify non-abelian extensions of Leibniz algebras by the second non-abelian cohomology of Leibniz algebras. 

Finally, in section $4$, we show that two pairs of the same finite dimensional {\Lie}-isoclinic ({\Lie}-stem) Leibniz algebras are not isomorphic and indicate some relevant counterexamples. Moreover, by using the concept of factor set, we present as our main result some conditions that {\Lie}-isoclinism and isomorphism, for finite dimensional {\Lie}-stem Leibniz algebras, are equal.

Throughout, all Leibniz algebras are considered over a fixed field $ \mathbb{K} $, unless otherwise stated.
Our basic assumptions are the following.

\begin{definition}
Let $ \mathfrak{m} $ be  a two-sided ideal of the Leibniz algebra $ \mathfrak{q} $, then $ (\mathfrak{m}, \mathfrak{q}) $ is said to be a \textit{pair} of Leibniz algebras.
\end{definition}
\begin{definition}
The \textit{{\Lie}-commutator} and \textit{{\Lie}-center} of the pair of $ (\mathfrak{m}, \mathfrak{q}) $
are both two-sided ideals of $\mathfrak{q}$ contained in $\mathfrak{m}$
\begin{center}
$[\mathfrak{m}, \mathfrak{q}]_{\Lie}=\langle\{ [m, q]+[q, m]\mid m\in\mathfrak{m}, q\in\mathfrak{q} \}\rangle $\\
$Z_{\Lie}(\mathfrak{m}, \mathfrak{q})=\lbrace m\in \mathfrak{m} \mid [m,q]+[q, m]=0~{\rm for~all}~q\in \mathfrak{q}\rbrace  = Z_{\Lie}(\mathfrak{q}) \cap \mathfrak{m}$.
\end{center}
\end{definition}
\begin{remark}
When $\mathfrak{m}=\mathfrak{q}$, then $Z_{\Lie}(\mathfrak{q}, \mathfrak{q})$ coincides with the {\Lie}-center of $\mathfrak{q}$ given in \cite{RefCKh}.
\end{remark}

\section{{\Lie}-isoclinism of  pairs of Leibniz algebras}
\label{sec:1}
 We begin with the following definition which is the corresponding relative version of the isoclinism of Lie algebras given in \cite{RefMP} (absolute case for Lie algebras).

\begin{definition} \label{isoclinism1}
The pairs of Leibniz algebras  $ (\mathfrak{m}_i, \mathfrak{q}_i), i=1,2$, are said to be {\Lie}-isoclinic if there exist
isomorphisms $\alpha:\mathfrak{q_1}/Z_{\Lie}(\mathfrak{m_1}, \mathfrak{q_1}) \longrightarrow \mathfrak{q_2}/Z_{\Lie}(\mathfrak{m_2}, \mathfrak{q_2})$ with $ \alpha (\mathfrak{m_1}/Z_{\Lie}(\mathfrak{m_1}, \mathfrak{q_1}))=  \mathfrak{m_2}/Z_{\Lie}(\mathfrak{m_2}, \mathfrak{q_2}) $ and $ \beta: [\mathfrak{m_1}, \mathfrak{q_1}]_{\Lie}\longrightarrow[\mathfrak{m_2}, \mathfrak{q_2}]_{\Lie} $ such that the following diagram is commutative:
\begin{equation} \label{isoclinism}
\begin{CD}
\frac{\mathfrak{m_1}}{Z_{\Lie}(\mathfrak{m_1}, \mathfrak{q_1})}\times \frac{\mathfrak{q_1}}{Z_{\Lie}(\mathfrak{m_1}, \mathfrak{q_1})}@>{C_1}>>[\mathfrak{m_1}, \mathfrak{q_1}]_{\Lie}\\
@VV{{\alpha_{|}}\times \alpha}V @VV{\beta}V\\
\frac{\mathfrak{m_2}}{Z_{\Lie}(\mathfrak{m_2}, \mathfrak{q_2})}\times \frac{\mathfrak{q_2}}{Z_{\Lie}(\mathfrak{m_2}, \mathfrak{q_1})}@>{C_2}>>[\mathfrak{m_2}, \mathfrak{q_2}]_{\Lie},
\end{CD}
\end{equation}
where $ C_i(\bar{m_i},\bar{q_i})=[m_i,q_i]+[q_i, m_i] $, for all ${\bar{m_i}}\in \frac{\mathfrak{m}_i}{Z_{\Lie}(\mathfrak{q}_i, \mathfrak{m}_i)}$ and ${\bar{q_i}}\in \frac{\mathfrak{q}_i}{Z_{\Lie}(\mathfrak{q}_i, \mathfrak{m}_i)}, i=1,2$.

In this case, the pair $(\alpha, \beta)$ is called a {\Lie}-isoclinism between $(\mathfrak{m_1}, \mathfrak{q_1})$ and $ (\mathfrak{m_2}, \mathfrak{q_2}) $ and we write $ (\mathfrak{m_1}, \mathfrak{q_1}) \sim (\mathfrak{m_2}, \mathfrak{q_2})$.
\end{definition}
The following Proposition provides an equivalent condition for {\Lie}-isoclinism between two pairs of Leibniz algebras.
\begin{proposition}
Let $ {\pi}_i: \mathfrak{q}_i \twoheadrightarrow \frac{\mathfrak{q}_i}{Z_{\Lie}(\mathfrak{m}_i, \mathfrak{q}_i)}, i= 1, 2$, be the canonical surjective homomorphisms and $\alpha:\mathfrak{q_1}/Z_{\Lie}(\mathfrak{m_1}, \mathfrak{q_1}) \longrightarrow \mathfrak{q_2}/Z_{\Lie}(\mathfrak{m_2}, \mathfrak{q_2})$ with $ \alpha (\mathfrak{m_1}/Z_{\Lie}(\mathfrak{m_1}, \mathfrak{q_1}))= \mathfrak{m_2}/Z_{\Lie}(\mathfrak{m_2}, \mathfrak{q_2}) $ and $ \beta: [\mathfrak{m_1}, \mathfrak{q_1}]_{\Lie}\longrightarrow[\mathfrak{m_2}, \mathfrak{q_2}]_{\Lie} $ be isomorphisms.The pair $(\alpha, \beta)$ is a {\Lie}-isoclinism between $(\mathfrak{m_1}, \mathfrak{q_1})$ and $ (\mathfrak{m_2}, \mathfrak{q_2}) $, if and only if $\beta([m_1,q_1]+[q_1, m_1])=[m_2,q_2]+[q_2, m_2]$, where $ m_2\in  \mathfrak{m_2}, q_2\in  \mathfrak{q_2} $, $\alpha(\pi_1(q_1))=\pi_2(q_2)$ and $\alpha(\pi_1(m_1))=\pi_2(m_2)$.
\end{proposition}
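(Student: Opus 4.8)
The plan is to read off the claimed identity as the element-wise transcription of the commutativity of diagram~\eqref{isoclinism}, so the whole argument reduces to unwinding definitions once two preliminary points are in place: that each $C_i$ is well defined on the quotients, and that $\pi_i$ is surjective so that representatives can be chosen freely. For the well-definedness of $C_i$, I would note that if $m_i$ is replaced by $m_i + z$ with $z \in Z_{\Lie}(\mathfrak{m}_i, \mathfrak{q}_i)$, then $[m_i + z, q_i] + [q_i, m_i + z] = [m_i, q_i] + [q_i, m_i] + \bigl([z, q_i] + [q_i, z]\bigr)$, and the last bracket vanishes by the very definition of the $\Lie$-center; the same computation handles a change of representative in the $\mathfrak{q}_i$-variable. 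Hence $C_i(\bar{m}_i, \bar{q}_i) = [m_i, q_i] + [q_i, m_i]$ depends only on the classes $\pi_i(m_i)$ and $\pi_i(q_i)$, which is exactly what allows us to pass freely between elements and their images.

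For the forward implication, assume $(\alpha, \beta)$ is a $\Lie$-isoclinism, i.e. $\beta \circ C_1 = C_2 \circ (\alpha_| \times \alpha)$. Given $m_1, q_1, m_2, q_2$ with $\alpha(\pi_1(m_1)) = \pi_2(m_2)$ and $\alpha(\pi_1(q_1)) = \pi_2(q_2)$, I would evaluate the diagram at the pair $(\pi_1(m_1), \pi_1(q_1))$. The top-then-right route gives $\beta(C_1(\pi_1(m_1), \pi_1(q_1))) = \beta([m_1, q_1] + [q_1, m_1])$, while the left-then-bottom route gives $C_2(\alpha(\pi_1(m_1)), \alpha(\pi_1(q_1))) = C_2(\pi_2(m_2), \pi_2(q_2)) = [m_2, q_2] + [q_2, m_2]$, where the last equality uses well-definedness of $C_2$ to read the value off the chosen representatives $m_2, q_2$. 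Commutativity then yields precisely $\beta([m_1, q_1] + [q_1, m_1]) = [m_2, q_2] + [q_2, m_2]$.

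For the converse, assume the stated identity holds whenever $\alpha(\pi_1(q_1)) = \pi_2(q_2)$ and $\alpha(\pi_1(m_1)) = \pi_2(m_2)$, and fix an arbitrary pair $(\bar{m}_1, \bar{q}_1)$ in the top-left corner. I would choose representatives $m_1, q_1$ with $\pi_1(m_1) = \bar{m}_1$ and $\pi_1(q_1) = \bar{q}_1$, and then---using surjectivity of $\pi_2$ together with the condition $\alpha(\mathfrak{m}_1/Z_{\Lie}(\mathfrak{m}_1, \mathfrak{q}_1)) = \mathfrak{m}_2/Z_{\Lie}(\mathfrak{m}_2, \mathfrak{q}_2)$---pick $m_2 \in \mathfrak{m}_2$ and $q_2 \in \mathfrak{q}_2$ with $\pi_2(m_2) = \alpha(\bar{m}_1)$ and $\pi_2(q_2) = \alpha(\bar{q}_1)$. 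The hypothesis applied to these elements is exactly the equality $\beta(C_1(\bar{m}_1, \bar{q}_1)) = C_2(\alpha_|(\bar{m}_1), \alpha(\bar{q}_1))$, and since $(\bar{m}_1, \bar{q}_1)$ was arbitrary this is the commutativity of~\eqref{isoclinism}. I do not expect a genuine obstacle here; the only point requiring care is that the representative $m_2$ can be taken \emph{inside} $\mathfrak{m}_2$, and this is guaranteed precisely by the requirement $\alpha(\mathfrak{m}_1/Z_{\Lie}(\mathfrak{m}_1, \mathfrak{q}_1)) = \mathfrak{m}_2/Z_{\Lie}(\mathfrak{m}_2, \mathfrak{q}_2)$ built into the definition of $\alpha$, so that $\alpha(\bar{m}_1)$ indeed lies in $\mathfrak{m}_2/Z_{\Lie}(\mathfrak{m}_2, \mathfrak{q}_2)$.
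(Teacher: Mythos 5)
Your proposal is correct and is essentially the paper's own argument: the paper disposes of this Proposition with the single phrase ``Direct checking,'' and your write-up is precisely that checking carried out in full (well-definedness of the $C_i$, evaluation of the commutative square in one direction, choice of representatives via surjectivity of $\pi_2$ and the condition $\alpha(\mathfrak{m}_1/Z_{\Lie}(\mathfrak{m}_1,\mathfrak{q}_1))=\mathfrak{m}_2/Z_{\Lie}(\mathfrak{m}_2,\mathfrak{q}_2)$ in the other). Nothing is missing, and no genuinely different route is taken.
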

\begin{proof}
Direct checking.
\end{proof}

\begin{remark}
When $\mathfrak{m}_i = \mathfrak{q}_i, i = 1, 2$, then we recover the notion of {\Lie}-isoclinism of Leibniz algebras given in \cite{RefBC}.
\end{remark}
An immediate result from Definition \ref{isoclinism1} is the following
\begin{corollary}\label{corollary1}
Let  the pairs of Leibniz algebras $( \mathfrak{m}, \mathfrak{q})$ and  $( \mathfrak{n}, \mathfrak{p})$ be  {\Lie}-isoclinic. Then $\mathfrak{m}$ and $\mathfrak{n}$ are  {\Lie}-isoclinic.
\end{corollary}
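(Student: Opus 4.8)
The plan is to manufacture, out of the given {\Lie}-isoclinism $(\alpha,\beta)$ between the pairs $(\mathfrak{m},\mathfrak{q})$ and $(\mathfrak{n},\mathfrak{p})$, a {\Lie}-isoclinism between the pairs $(\mathfrak{m},\mathfrak{m})$ and $(\mathfrak{n},\mathfrak{n})$; by the Remark following the Proposition, this latter statement is precisely what it means for the Leibniz algebras $\mathfrak{m}$ and $\mathfrak{n}$ to be {\Lie}-isoclinic. Throughout I would use the inclusion $Z_{\Lie}(\mathfrak{m},\mathfrak{q})\subseteq Z_{\Lie}(\mathfrak{m},\mathfrak{m})=Z_{\Lie}(\mathfrak{m})$, valid because the requirement $[m,q]+[q,m]=0$ for all $q\in\mathfrak{q}$ is stronger than the same requirement with $q$ ranging only over $\mathfrak{m}\subseteq\mathfrak{q}$, and likewise $Z_{\Lie}(\mathfrak{n},\mathfrak{p})\subseteq Z_{\Lie}(\mathfrak{n})$, so that the quotients $Z_{\Lie}(\mathfrak{m})/Z_{\Lie}(\mathfrak{m},\mathfrak{q})$ and $Z_{\Lie}(\mathfrak{n})/Z_{\Lie}(\mathfrak{n},\mathfrak{p})$ make sense.

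First I would produce the left-hand isomorphism. Since $\alpha$ carries $\mathfrak{m}/Z_{\Lie}(\mathfrak{m},\mathfrak{q})$ onto $\mathfrak{n}/Z_{\Lie}(\mathfrak{n},\mathfrak{p})$, its restriction $\alpha_{|}$ is an isomorphism between these ideals. The crucial point is that $\alpha_{|}$ identifies $Z_{\Lie}(\mathfrak{m})/Z_{\Lie}(\mathfrak{m},\mathfrak{q})$ with $Z_{\Lie}(\mathfrak{n})/Z_{\Lie}(\mathfrak{n},\mathfrak{p})$. To see this, note that a class $\bar{m}\in\mathfrak{m}/Z_{\Lie}(\mathfrak{m},\mathfrak{q})$ lies in $Z_{\Lie}(\mathfrak{m})/Z_{\Lie}(\mathfrak{m},\mathfrak{q})$ exactly when $C_1(\bar{m},\bar{m'})=[m,m']+[m',m]=0$ for every $\bar{m'}\in\mathfrak{m}/Z_{\Lie}(\mathfrak{m},\mathfrak{q})$; using the commutativity $\beta\circ C_1=C_2\circ(\alpha_{|}\times\alpha)$, the injectivity of $\beta$, and the fact that $\alpha_{|}$ maps $\mathfrak{m}/Z_{\Lie}(\mathfrak{m},\mathfrak{q})$ onto $\mathfrak{n}/Z_{\Lie}(\mathfrak{n},\mathfrak{p})$, this is equivalent to $C_2(\alpha_{|}(\bar m),\bar{n'})=0$ for all $\bar{n'}$, i.e.\ to $\alpha_{|}(\bar m)$ lying in $Z_{\Lie}(\mathfrak{n})/Z_{\Lie}(\mathfrak{n},\mathfrak{p})$. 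Hence, by the third isomorphism theorem, $\alpha_{|}$ descends to an isomorphism $\alpha':\mathfrak{m}/Z_{\Lie}(\mathfrak{m})\longrightarrow\mathfrak{n}/Z_{\Lie}(\mathfrak{n})$.

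Next I would build the right-hand isomorphism. Since $[\mathfrak{m},\mathfrak{m}]_{\Lie}\subseteq[\mathfrak{m},\mathfrak{q}]_{\Lie}$, it suffices to check that $\beta$ restricts to an isomorphism $[\mathfrak{m},\mathfrak{m}]_{\Lie}\to[\mathfrak{n},\mathfrak{n}]_{\Lie}$. On a generator $[m,m']+[m',m]=C_1(\bar m,\bar{m'})$ with $m,m'\in\mathfrak{m}$, the diagram gives $\beta([m,m']+[m',m])=C_2(\alpha_{|}(\bar m),\alpha_{|}(\bar{m'}))$, which is a generator of $[\mathfrak{n},\mathfrak{n}]_{\Lie}$ because both $\alpha_{|}(\bar m)$ and $\alpha_{|}(\bar{m'})$ are represented by elements of $\mathfrak{n}$; thus $\beta([\mathfrak{m},\mathfrak{m}]_{\Lie})\subseteq[\mathfrak{n},\mathfrak{n}]_{\Lie}$, and applying the same reasoning to the inverse {\Lie}-isoclinism $(\alpha^{-1},\beta^{-1})$ yields the reverse inclusion. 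Denote the resulting isomorphism by $\beta'$.

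Finally, it remains to verify that $(\alpha',\beta')$ makes the defining square of Definition \ref{isoclinism1} for the pairs $(\mathfrak{m},\mathfrak{m})$ and $(\mathfrak{n},\mathfrak{n})$ commute. This is immediate once one observes that the commutator map $C_1'$ of $(\mathfrak{m},\mathfrak{m})$ factors through $C_1$ of $(\mathfrak{m},\mathfrak{q})$ via the further quotient by $Z_{\Lie}(\mathfrak{m})$: evaluating $\beta'$ on $C_1'(\bar m,\bar{m'})=[m,m']+[m',m]$ and invoking the already-established square for $(\alpha,\beta)$ produces exactly $C_2'(\alpha'(\bar m),\alpha'(\bar{m'}))$. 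I expect the only genuinely delicate step to be the identification of the two centers in the second paragraph, since $Z_{\Lie}(\mathfrak{m},\mathfrak{q})$ and $Z_{\Lie}(\mathfrak{m})$ really do differ in general; everything else is a routine transport of structure along $\alpha$ and $\beta$ combined with the isomorphism theorems.
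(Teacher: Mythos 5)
Your proof is correct. The paper in fact offers no argument at all --- it labels this corollary an immediate consequence of Definition \ref{isoclinism1} --- and what you wrote is exactly the natural verification being left to the reader: $\alpha$ descends to an isomorphism $\mathfrak{m}/Z_{\Lie}(\mathfrak{m})\to \mathfrak{n}/Z_{\Lie}(\mathfrak{n})$ (your identification of $Z_{\Lie}(\mathfrak{m})/Z_{\Lie}(\mathfrak{m},\mathfrak{q})$ with $Z_{\Lie}(\mathfrak{n})/Z_{\Lie}(\mathfrak{n},\mathfrak{p})$ via injectivity of $\beta$ and surjectivity of $\alpha_{|}$ being the one step of genuine content), while $\beta$ restricts to an isomorphism $[\mathfrak{m},\mathfrak{m}]_{\Lie}\to[\mathfrak{n},\mathfrak{n}]_{\Lie}$, and the two squares are compatible.
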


The following Lemma yields information about the {\Lie}-isoclinism between a pair of Leibniz algebras and its quotient pair by a two-sided ideal.

\begin{lemma}\label{lemma1}
Let $ (\mathfrak{m}, \mathfrak{q})$ be a pair of Leibniz algebras and $ \mathfrak{n} $ a two-sided ideal of $ \mathfrak{q} $ contained in $ \mathfrak{m} $. Then $ (\frac{\mathfrak{m}}{\mathfrak{n}}, \frac{\mathfrak{q}}{\mathfrak{n}}) \sim (\frac{\mathfrak{m}}{\mathfrak{n \cap [\mathfrak{m}, \mathfrak{q}]_{\Lie}}}, \frac{\mathfrak{q}}{\mathfrak{n \cap  [\mathfrak{m}, \mathfrak{q}]_{\Lie}}})$. In particular, $\mathfrak{n} \cap [\mathfrak{m}, \mathfrak{q}]_{\Lie}=0 $ if and only if $ (\frac{\mathfrak{m}}{\mathfrak{n}}, \frac{\mathfrak{q}}{\mathfrak{n}}) \sim (\mathfrak{m}, \mathfrak{q}) $.
\end{lemma}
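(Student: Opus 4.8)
The plan is to set $\mathfrak{k} = \mathfrak{n} \cap [\mathfrak{m}, \mathfrak{q}]_{\Lie}$, so that the claim reads $(\mathfrak{m}/\mathfrak{n}, \mathfrak{q}/\mathfrak{n}) \sim (\mathfrak{m}/\mathfrak{k}, \mathfrak{q}/\mathfrak{k})$, and to produce the isomorphisms $\alpha, \beta$ explicitly and then verify the criterion of the Proposition above, rather than chasing diagram (\ref{isoclinism}) directly. Since $\mathfrak{k} \subseteq \mathfrak{n}$, reduction modulo $\mathfrak{n}$ factors through $\mathfrak{q}/\mathfrak{k}$, giving a canonical surjection $\rho : \mathfrak{q}/\mathfrak{k} \twoheadrightarrow \mathfrak{q}/\mathfrak{n}$ from which all the maps below are built.

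First I would identify the two Lie-centers through their preimages in $\mathfrak{q}$. Writing $\tilde Z_1 = \{ m \in \mathfrak{m} : [m,q]+[q,m] \in \mathfrak{n} \ \text{for all} \ q \in \mathfrak{q}\}$ and $\tilde Z_2$ for the same set with $\mathfrak{n}$ replaced by $\mathfrak{k}$, one has $Z_{\Lie}(\mathfrak{m}/\mathfrak{n}, \mathfrak{q}/\mathfrak{n}) = \tilde Z_1/\mathfrak{n}$ and $Z_{\Lie}(\mathfrak{m}/\mathfrak{k}, \mathfrak{q}/\mathfrak{k}) = \tilde Z_2/\mathfrak{k}$. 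The key observation is $\tilde Z_1 = \tilde Z_2$: the inclusion $\tilde Z_2 \subseteq \tilde Z_1$ is clear, and conversely if $[m,q]+[q,m] \in \mathfrak{n}$ then, since this element always lies in $[\mathfrak{m}, \mathfrak{q}]_{\Lie}$, it lies in $\mathfrak{n} \cap [\mathfrak{m}, \mathfrak{q}]_{\Lie} = \mathfrak{k}$. The third isomorphism theorem then gives $\frac{\mathfrak{q}/\mathfrak{n}}{Z_{\Lie}(\mathfrak{m}/\mathfrak{n}, \mathfrak{q}/\mathfrak{n})} \cong \mathfrak{q}/\tilde Z_1 = \mathfrak{q}/\tilde Z_2 \cong \frac{\mathfrak{q}/\mathfrak{k}}{Z_{\Lie}(\mathfrak{m}/\mathfrak{k}, \mathfrak{q}/\mathfrak{k})}$, and I take $\alpha$ to be this composite (sending the class of $q+\mathfrak{n}$ to the class of $q+\mathfrak{k}$), which manifestly carries the $\mathfrak{m}$-part onto the $\mathfrak{m}$-part. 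For $\beta$, both Lie-commutators are the image of $[\mathfrak{m},\mathfrak{q}]_{\Lie}$ under the respective projections, so $[\mathfrak{m}/\mathfrak{n}, \mathfrak{q}/\mathfrak{n}]_{\Lie} \cong [\mathfrak{m},\mathfrak{q}]_{\Lie}/\mathfrak{k} \cong [\mathfrak{m}/\mathfrak{k}, \mathfrak{q}/\mathfrak{k}]_{\Lie}$ (using $\mathfrak{k} \subseteq [\mathfrak{m},\mathfrak{q}]_{\Lie}$ on the right), and I define $\beta$ by $([m,q]+[q,m])+\mathfrak{n} \mapsto ([m,q]+[q,m])+\mathfrak{k}$, i.e. as the inverse of the restriction of $\rho$ to Lie-commutators.

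The heart of the argument is verifying the criterion of the Proposition for this $(\alpha, \beta)$. Given representatives with $\alpha\pi_1(q_1+\mathfrak{n}) = \pi_2(q_2+\mathfrak{k})$ and $\alpha\pi_1(m_1+\mathfrak{n}) = \pi_2(m_2+\mathfrak{k})$, the description of $\alpha$ forces $q_1 - q_2,\, m_1 - m_2 \in \tilde Z_2$. Substituting $m_1 = m_2 + z$, $q_1 = q_2 + w$ with $z,w \in \tilde Z_2$ and expanding by bilinearity,
\[
\bigl([m_1,q_1]+[q_1,m_1]\bigr) - \bigl([m_2,q_2]+[q_2,m_2]\bigr)
\]
becomes the sum of the three symmetric combinations $[m_2,w]+[w,m_2]$, $[z,q_2]+[q_2,z]$ and $[z,w]+[w,z]$; each lies in $\mathfrak{k}$ directly from the defining property of $\tilde Z_2$ (with the $\tilde Z_2$-element in the left slot). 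Hence the difference lies in $\mathfrak{k}$, which is exactly the required identity $\beta([m_1,q_1]+[q_1,m_1]) = [m_2,q_2]+[q_2,m_2]$ in $[\mathfrak{m}/\mathfrak{k},\mathfrak{q}/\mathfrak{k}]_{\Lie}$. I expect this bilinear expansion, together with checking that $\alpha,\beta$ are well defined and bijective, to be the only real computation; the genuinely subtle point is the center identity $\tilde Z_1 = \tilde Z_2$, which is precisely what makes $\alpha$ an isomorphism rather than a mere surjection.

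Finally, the particular case follows by specialization: when $\mathfrak{n} \cap [\mathfrak{m},\mathfrak{q}]_{\Lie} = 0$ we have $\mathfrak{k}=0$, so the second pair is literally $(\mathfrak{m}, \mathfrak{q})$ and the general statement gives $(\mathfrak{m}/\mathfrak{n}, \mathfrak{q}/\mathfrak{n}) \sim (\mathfrak{m}, \mathfrak{q})$. For the converse, a Lie-isoclinism supplies in particular an isomorphism $\beta : [\mathfrak{m}/\mathfrak{n}, \mathfrak{q}/\mathfrak{n}]_{\Lie} \to [\mathfrak{m}, \mathfrak{q}]_{\Lie}$; combined with $[\mathfrak{m}/\mathfrak{n}, \mathfrak{q}/\mathfrak{n}]_{\Lie} \cong [\mathfrak{m},\mathfrak{q}]_{\Lie}/\mathfrak{k}$ this yields $[\mathfrak{m},\mathfrak{q}]_{\Lie}/\mathfrak{k} \cong [\mathfrak{m},\mathfrak{q}]_{\Lie}$, and comparing dimensions (in the finite-dimensional setting of the paper) forces $\mathfrak{k} = \mathfrak{n} \cap [\mathfrak{m},\mathfrak{q}]_{\Lie} = 0$.
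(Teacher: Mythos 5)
Your proof of the main isoclinism $(\mathfrak{m}/\mathfrak{n},\mathfrak{q}/\mathfrak{n})\sim(\mathfrak{m}/\mathfrak{k},\mathfrak{q}/\mathfrak{k})$ is correct and is, up to the direction of the arrows, the paper's own argument written out in full detail: the paper's map $\gamma:\mathfrak{q}/\mathfrak{k}\to\mathfrak{q}/\mathfrak{n}$ is your $\rho$; the paper's two claims that $\gamma$ carries the {\Lie}-center onto the {\Lie}-center and that the induced $\alpha$ is injective are together exactly your identity $\tilde Z_1=\tilde Z_2$, which you rightly single out as the crux (it holds because each generator $[m,q]+[q,m]$ already lies in $[\mathfrak{m},\mathfrak{q}]_{\Lie}$); and your $\beta$ is the inverse of the paper's $\beta$. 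Your verification of the criterion of the unlabelled Proposition, via the bilinear expansion into the three symmetric terms lying in $\mathfrak{k}$, is a legitimate replacement for the paper's one-line assertion that diagram (\ref{isoclinism}) commutes. So for the first statement, and for the ``if'' half of the second, there is nothing to object to.

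Where you genuinely part ways with the paper is the ``only if'' half, and there your argument carries a hypothesis the lemma does not state: you conclude $\mathfrak{k}=0$ from $[\mathfrak{m},\mathfrak{q}]_{\Lie}/\mathfrak{k}\cong[\mathfrak{m},\mathfrak{q}]_{\Lie}$ by counting dimensions, which requires $[\mathfrak{m},\mathfrak{q}]_{\Lie}$ to be finite dimensional, whereas the lemma is stated for arbitrary Leibniz algebras (and is invoked, e.g.\ in Theorem \ref{theorem1}, in a Zorn's-lemma setting with no finiteness assumed). The paper argues differently: it asserts that the isomorphism $\beta$ coming from the isoclinism ``is induced by the canonical projection'', whose kernel is $\mathfrak{n}\cap[\mathfrak{m},\mathfrak{q}]_{\Lie}$; but that assertion is unjustified, since Definition \ref{isoclinism1} places no such constraint on $\beta$. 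In fact, without finite dimension the ``only if'' direction is false: over a field of characteristic $\neq 2$, let $\mathfrak{m}=\mathfrak{q}$ have basis $\{e_i,f_i\}_{i\geq 1}$ with $[e_i,e_i]=f_i$ the only nonzero products, and let $\mathfrak{n}=span\{f_1\}$. Then $Z_{\Lie}(\mathfrak{q})=[\mathfrak{q},\mathfrak{q}]_{\Lie}=span\{f_i\}$, both central quotients are abelian, and the shift sending the class of $e_i$ to the class of $e_{i+1}$ and $f_i$ to the class of $f_{i+1}$ defines a {\Lie}-isoclinism $(\mathfrak{q},\mathfrak{q})\sim(\mathfrak{q}/\mathfrak{n},\mathfrak{q}/\mathfrak{n})$, although $\mathfrak{n}\cap[\mathfrak{q},\mathfrak{q}]_{\Lie}=\mathfrak{n}\neq 0$. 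So your dimension count is not a cosmetic shortcut but a genuine restriction; it is, however, an honest one, while the paper's own proof of this direction silently assumes what it needs, and the unrestricted statement cannot be proved at all.
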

\begin{proof}
We set $ \overline{\mathfrak{q}}=\frac{\mathfrak{q}}{\mathfrak{n}},  \overline{\mathfrak{m}}=\frac{\mathfrak{m}}{\mathfrak{n}}, \widetilde{\mathfrak{q}}=\frac{\mathfrak{q}}{\mathfrak{n \cap  [\mathfrak{m}, \mathfrak{q}]_{\Lie}}}$ and $\widetilde{\mathfrak{m}}=\frac{\mathfrak{m}}{\mathfrak{n \cap [\mathfrak{m}, \mathfrak{q}]_{\Lie}}}$.

 It is easy to check that the map $\gamma : \widetilde{\mathfrak{q}} \to  \overline{\mathfrak{q}}$ given by $\gamma(\widetilde{q}) = \gamma(q + (\mathfrak{n} \cap [\mathfrak{m}, \mathfrak{q}]_{\Lie})) = \overline{q} = q +  \mathfrak{n}$, is a surjective homomorphism such that $\gamma\left( Z_{\Lie}( \widetilde{\mathfrak{m}},  \widetilde{\mathfrak{q}}) \right) = Z_{\Lie}(\overline{\mathfrak{m}}, \overline{\mathfrak{q}})$, then it induces a surjective homomorphism
 $\alpha: \dfrac{ \widetilde{\mathfrak{q}}}{Z_{\Lie}( \widetilde{\mathfrak{m}},  \widetilde{\mathfrak{q}})}\twoheadrightarrow \dfrac{\overline{\mathfrak{q}}}{Z_{\Lie}(\overline{\mathfrak{m}}, \overline{\mathfrak{q}})}$, given by $\alpha ( \widetilde{q}+Z_{\Lie}( \widetilde{\mathfrak{m}},  \widetilde{\mathfrak{q}}))=\overline{q}+Z_{\Lie}(\overline{\mathfrak{m}}, \overline{\mathfrak{q}})$. Moreover $\alpha$ is injective, because $\alpha(\widetilde{q}+Z_{\Lie}( \widetilde{\mathfrak{m}},  \widetilde{\mathfrak{q}}))=0$, implies that $\overline{q} \in Z_{\Lie}(\overline{\mathfrak{m}}, \overline{\mathfrak{q}})$, that is $\overline{q} = \gamma(\widetilde{q})$ with $\widetilde{q} \in {Z_{\Lie}( \widetilde{\mathfrak{m}},  \widetilde{\mathfrak{q}})}$. Consequently, $\alpha$ is an isomorphism.

 On the other hand, the restriction of $\gamma$ provides the surjective homomorphism $\beta:[ \widetilde{\mathfrak{m}},  \widetilde{\mathfrak{q}}]_{\Lie} \twoheadrightarrow [\overline{\mathfrak{m}}, \overline{\mathfrak{q}}]_{\Lie},$ given by $\beta([ \widetilde{m}, \widetilde{q}]+[\widetilde{q}, \widetilde{m}])=[\overline{m},\overline{q}]+[\overline{q},\overline{m}]$. Moreover it is easy to check that $\beta$ is injective.

 Now the commutativity of diagram (\ref{isoclinism}) is obvious.

For the second statement,  if $\mathfrak{n} \cap [\mathfrak{m}, \mathfrak{q}]_{\Lie}=0 $ then $ \widetilde{\mathfrak{m}}\cong \mathfrak{m} $ and $\tilde{\mathfrak{q}}\cong \mathfrak{q}$ and so $(\frac{\mathfrak{m}}{\mathfrak{n}}, \frac{\mathfrak{q}}{\mathfrak{n}}) \sim (\mathfrak{m}, \mathfrak{q}) $.
Conversely, the isomorphism $\beta : [\mathfrak{m}, \mathfrak{q}]_{\Lie} \to [\overline{\mathfrak{m}}, \overline{\mathfrak{q}}]_{\Lie}$ actually is induced by the canonical projection $\pi : \mathfrak{q} \twoheadrightarrow \frac{\mathfrak{q}}{\mathfrak{n}}$ and $\Ker(\beta) = \mathfrak{n} \cap [\mathfrak{m}, \mathfrak{q}]_{\Lie}$
\end{proof}

\begin{definition}
The pair of Leibniz algebras $(\mathfrak{m}, \mathfrak{q})$ is said to be a \textit{{\Lie}-stem pair} of Leibniz algebras, when
$Z_{\Lie}(\mathfrak{m}, \mathfrak{q})\subseteq [\mathfrak{m}, \mathfrak{q}]_{\Lie} $.
\end{definition}

\begin{example}
An example of {\Lie}-stem pair is provided by the three-dimensional Leibniz algebra $\mathfrak{q}$ with basis $\{a_1,a_2,a_3\}$ and bracket operation given by $[a_1,a_3]=a_1, [a_2,a_3]=a_2$ (class 3 {\it a)} in \cite{RefCIL}), and the two-sided ideal $\mathfrak{m} = span \{a_1\}$. Obviously $0 =Z_{\Lie}(\mathfrak{m},\mathfrak{q})\subseteq   [\mathfrak{m},\mathfrak{q}]_{\Lie} = span \{a_1\}$.
\end{example}

\begin{proposition}\label{proposition2}
The pair of Leibniz algebras $(\mathfrak{m}, \mathfrak{q})$ is a {\Lie}-stem pair  if and only if the unique two-sided ideal $\mathfrak{s}$ of  $\mathfrak{q}$, such that $\mathfrak{s} \subseteq \mathfrak{m}$ and $\mathfrak{s}\cap [\mathfrak{m}, \mathfrak{q}]_{\Lie} =0$, is the trivial one.
\end{proposition}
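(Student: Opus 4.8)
The plan is to hinge both implications on a single preliminary observation: every two-sided ideal $\mathfrak{s} \subseteq \mathfrak{m}$ satisfying $\mathfrak{s} \cap [\mathfrak{m},\mathfrak{q}]_{\Lie} = 0$ is automatically contained in the {\Lie}-center $Z_{\Lie}(\mathfrak{m},\mathfrak{q})$. Indeed, for $s \in \mathfrak{s}$ and $q \in \mathfrak{q}$ the element $[s,q]+[q,s]$ lies in $\mathfrak{s}$ (since $\mathfrak{s}$ is a two-sided ideal) and also in $[\mathfrak{m},\mathfrak{q}]_{\Lie}$ (since $s \in \mathfrak{m}$), hence it lies in $\mathfrak{s} \cap [\mathfrak{m},\mathfrak{q}]_{\Lie} = 0$; therefore $s \in Z_{\Lie}(\mathfrak{m},\mathfrak{q})$. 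I would record this as a standalone claim, as it drives the rest.

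For the forward implication, assume $(\mathfrak{m},\mathfrak{q})$ is {\Lie}-stem, i.e. $Z_{\Lie}(\mathfrak{m},\mathfrak{q}) \subseteq [\mathfrak{m},\mathfrak{q}]_{\Lie}$, and let $\mathfrak{s}$ be any two-sided ideal with $\mathfrak{s} \subseteq \mathfrak{m}$ and $\mathfrak{s}\cap[\mathfrak{m},\mathfrak{q}]_{\Lie}=0$. By the claim, $\mathfrak{s}\subseteq Z_{\Lie}(\mathfrak{m},\mathfrak{q})\subseteq[\mathfrak{m},\mathfrak{q}]_{\Lie}$, and therefore $\mathfrak{s}=\mathfrak{s}\cap[\mathfrak{m},\mathfrak{q}]_{\Lie}=0$. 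Thus the only such ideal is the trivial one. This direction is short and robust.

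For the converse I would argue by contraposition: supposing $(\mathfrak{m},\mathfrak{q})$ is not {\Lie}-stem, so $Z_{\Lie}(\mathfrak{m},\mathfrak{q})\not\subseteq[\mathfrak{m},\mathfrak{q}]_{\Lie}$, I would try to produce a nonzero two-sided ideal meeting $[\mathfrak{m},\mathfrak{q}]_{\Lie}$ trivially. Writing $Z=Z_{\Lie}(\mathfrak{m},\mathfrak{q})$ and $L=[\mathfrak{m},\mathfrak{q}]_{\Lie}$, the hypothesis gives $Z\cap L\subsetneq Z$. The natural candidate is a complement $\mathfrak{s}$ of $Z\cap L$ inside $Z$, so that $Z=(Z\cap L)\oplus\mathfrak{s}$: it is nonzero, it lies in $\mathfrak{m}$, and since $\mathfrak{s}\subseteq Z$ one has $\mathfrak{s}\cap L=\mathfrak{s}\cap(Z\cap L)=0$, as required.

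The main obstacle is exactly the last point: $\mathfrak{s}$ must be chosen to be a \emph{two-sided ideal} of $\mathfrak{q}$, not merely a vector-space complement. Because $Z$ is itself a two-sided ideal, $\mathfrak{q}$ acts on $Z$ on both sides and $Z\cap L$ is an invariant subspace, so upgrading $\mathfrak{s}$ to an ideal amounts to splitting this action, i.e. finding a $\mathfrak{q}$-invariant complement of $Z\cap L$ in $Z$. I expect this to be the crux, and it is not free: the relevant complete reducibility can fail (a one-dimensional invariant $Z\cap L$ sitting inside a two-dimensional $Z$ on which some element of $\mathfrak{q}$ acts as a single Jordan block admits no invariant complement), so this step seems to require an extra hypothesis — for instance that $\mathfrak{q}$ act trivially on $Z$, so that every subspace of $Z$ is automatically an ideal — or a more careful canonical construction. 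I would therefore concentrate my effort here, either by imposing such a condition or by exhibiting a distinguished invariant complement; absent that, only the forward implication is secure.
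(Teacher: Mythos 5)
Your forward implication is correct and coincides with the paper's own argument: both hinge on the observation that any two-sided ideal $\mathfrak{s}\subseteq\mathfrak{m}$ with $\mathfrak{s}\cap[\mathfrak{m},\mathfrak{q}]_{\Lie}=0$ satisfies $\mathfrak{s}\subseteq Z_{\Lie}(\mathfrak{m},\mathfrak{q})$, whence in the {\Lie}-stem case $\mathfrak{s}=\mathfrak{s}\cap Z_{\Lie}(\mathfrak{m},\mathfrak{q})\subseteq\mathfrak{s}\cap[\mathfrak{m},\mathfrak{q}]_{\Lie}=0$. For the converse, the paper does not attempt your complement construction; it picks $z\in Z_{\Lie}(\mathfrak{m},\mathfrak{q})\setminus[\mathfrak{m},\mathfrak{q}]_{\Lie}$, takes $\mathfrak{s}$ to be the two-sided ideal of $\mathfrak{q}$ generated by $z$, and then simply asserts that $\mathfrak{s}\cap[\mathfrak{m},\mathfrak{q}]_{\Lie}=0$. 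That assertion is unjustified: $\mathfrak{s}$ contains all brackets $[z,q]$, $q \in \mathfrak{q}$, and nothing forces these to avoid the {\Lie}-commutator. So the paper's proof breaks down at exactly the point you identified as the crux, and your refusal to certify this direction was the right call.

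In fact, your Jordan-block obstruction occurs inside an actual Leibniz algebra, so over any field of characteristic $\neq 2$ the converse is false, not merely unproven. Let $\mathfrak{q}=\mathrm{span}\{z,q,w\}$ with nonzero products $[z,q]=w$, $[q,z]=-w$, $[q,q]=w$, and take $\mathfrak{m}=\mathfrak{q}$. All products lie in $\mathbb{K}w$ and $w$ is a two-sided annihilator, so the Leibniz identity holds trivially. One computes $Z_{\Lie}(\mathfrak{q})=\mathrm{span}\{z,w\}$ and $[\mathfrak{q},\mathfrak{q}]_{\Lie}=\mathbb{K}w$, so the pair $(\mathfrak{q},\mathfrak{q})$ is not {\Lie}-stem. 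Yet the only two-sided ideal meeting $[\mathfrak{q},\mathfrak{q}]_{\Lie}$ trivially is $0$: if $0\neq x=\alpha z+\beta q+\gamma w$ lies in an ideal $\mathfrak{s}$, then $[x,q]=(\alpha+\beta)w$ and $[q,x]=(\beta-\alpha)w$ lie in $\mathfrak{s}$, and unless $\alpha=\beta=0$ one of them is a nonzero multiple of $w$, while if $\alpha=\beta=0$ then $x=\gamma w$ itself is; either way $w\in\mathfrak{s}$. Note that right bracketing by $q$ acts on $Z_{\Lie}(\mathfrak{q})$ by $z\mapsto w\mapsto 0$, a single nilpotent Jordan block, and $Z_{\Lie}(\mathfrak{q})\cap[\mathfrak{q},\mathfrak{q}]_{\Lie}=\mathbb{K}w$ has no invariant complement --- precisely the scenario you predicted. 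So your proposal proves all that can be proved without extra hypotheses; the remaining gap is in the statement itself, and this also undermines the paper's appeal to the converse direction in the proof of Theorem \ref{theorem1} (whose conclusion, in this example, happens to survive only because the {\Lie}-stem member of the family, the two-dimensional algebra $\mathrm{span}\{q,w\}$ with $[q,q]=w$, is not a quotient of $\mathfrak{q}$).
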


\begin{proof}
Let $ \mathfrak{s} $ be a two-sided ideal of $ \mathfrak{q}$ such that $\mathfrak{s}\subseteq \mathfrak{m}$ and $ \mathfrak{s}\cap [\mathfrak{m}, \mathfrak{q}]_{\Lie} =0$, then $ \mathfrak{s}\subseteq Z_{\Lie}(\mathfrak{m}, \mathfrak{q})$ and so $ \mathfrak{s}=\mathfrak{s} \cap Z_{\Lie}(\mathfrak{m}, \mathfrak{q})\subseteq \mathfrak{s} \cap [\mathfrak{m}, \mathfrak{q}]_{\Lie}=0$.\\
Conversely, assume that on the contrary $Z_{\Lie}(\mathfrak{m}, \mathfrak{q})\nsubseteq [\mathfrak{m}, \mathfrak{q}]_{\Lie}$, then there exists $z\in Z_{\Lie}(\mathfrak{m}, \mathfrak{q})\backslash [\mathfrak{m}, \mathfrak{q}]_{\Lie}, z \neq 0$.
Put $ \mathfrak{s}$ the two sided-ideal of $\mathfrak{q}$ spanned by $z$, which is contained in $\mathfrak{m}$. So from the assumption we have $ \mathfrak{s}\cap [\mathfrak{m}, \mathfrak{q}]_{\Lie} =0 $. Hence $ \mathfrak{s}=0$ and a contradiction follows. 
\end{proof}

\vspace*{-.23cm}
Plainly, {\Lie}-isoclinism between the pairs of Leibniz algebras is an equivalence relation, (see for instance \cite{RefBC}). By the {\Lie}-isoclinic family we mean equivalent classes that contains the class of all pairs of Leibniz algebras. The following Theorem ensures the existence of a {\Lie}-stem pair in the {\Lie}-isoclinic family of pairs of Leibniz algebras.
\begin{theorem}\label{theorem1}
Every {\Lie}-isoclinic family $\mathcal{C}$ of pairs of Leibniz algebras contains at least one {\Lie}-stem pair of Leibniz algebras.
\end{theorem}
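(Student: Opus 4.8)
The plan is to start from an arbitrary pair $(\mathfrak{m}, \mathfrak{q})$ belonging to the family $\mathcal{C}$ and to manufacture, inside the same family, a pair that is {\Lie}-stem. Since {\Lie}-isoclinism is an equivalence relation and any pair obtained by passing to a suitable quotient is {\Lie}-isoclinic to $(\mathfrak{m}, \mathfrak{q})$ by Lemma \ref{lemma1}, it suffices to exhibit a single two-sided ideal $\mathfrak{s}$ of $\mathfrak{q}$, contained in $\mathfrak{m}$ and satisfying $\mathfrak{s} \cap [\mathfrak{m}, \mathfrak{q}]_{\Lie} = 0$, such that the associated quotient pair $(\mathfrak{m}/\mathfrak{s}, \mathfrak{q}/\mathfrak{s})$ is {\Lie}-stem.

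Concretely, I would consider the collection $\mathcal{S}$ of all two-sided ideals $\mathfrak{s}$ of $\mathfrak{q}$ with $\mathfrak{s} \subseteq \mathfrak{m}$ and $\mathfrak{s} \cap [\mathfrak{m}, \mathfrak{q}]_{\Lie} = 0$, and choose a \emph{maximal} element $\mathfrak{s}$ of $\mathcal{S}$. In the finite-dimensional setting one simply takes $\mathfrak{s} \in \mathcal{S}$ of largest dimension; in general one invokes Zorn's lemma, checking that the union of a chain in $\mathcal{S}$ is again a two-sided ideal lying in $\mathfrak{m}$ and meeting $[\mathfrak{m}, \mathfrak{q}]_{\Lie}$ only in $0$. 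Because $\mathfrak{s} \cap [\mathfrak{m}, \mathfrak{q}]_{\Lie} = 0$, the second statement of Lemma \ref{lemma1} (with $\mathfrak{n} = \mathfrak{s}$) gives $(\mathfrak{m}/\mathfrak{s}, \mathfrak{q}/\mathfrak{s}) \sim (\mathfrak{m}, \mathfrak{q})$, so the quotient pair indeed lies in $\mathcal{C}$.

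It then remains to verify that $(\mathfrak{m}/\mathfrak{s}, \mathfrak{q}/\mathfrak{s})$ is {\Lie}-stem, for which I would use the characterization of Proposition \ref{proposition2}: one must show that the only two-sided ideal $\overline{\mathfrak{t}}$ of $\mathfrak{q}/\mathfrak{s}$ with $\overline{\mathfrak{t}} \subseteq \mathfrak{m}/\mathfrak{s}$ and $\overline{\mathfrak{t}} \cap [\mathfrak{m}/\mathfrak{s}, \mathfrak{q}/\mathfrak{s}]_{\Lie} = 0$ is the trivial one. By the correspondence theorem such an ideal has the form $\overline{\mathfrak{t}} = \mathfrak{t}/\mathfrak{s}$ with $\mathfrak{s} \subseteq \mathfrak{t} \subseteq \mathfrak{m}$ a two-sided ideal of $\mathfrak{q}$, and one records the identity $[\mathfrak{m}/\mathfrak{s}, \mathfrak{q}/\mathfrak{s}]_{\Lie} = ([\mathfrak{m}, \mathfrak{q}]_{\Lie} + \mathfrak{s})/\mathfrak{s}$. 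The vanishing condition on $\overline{\mathfrak{t}}$ then translates to $\mathfrak{t} \cap ([\mathfrak{m}, \mathfrak{q}]_{\Lie} + \mathfrak{s}) \subseteq \mathfrak{s}$, whence $\mathfrak{t} \cap [\mathfrak{m}, \mathfrak{q}]_{\Lie} \subseteq \mathfrak{s} \cap [\mathfrak{m}, \mathfrak{q}]_{\Lie} = 0$. Thus $\mathfrak{t}$ itself belongs to $\mathcal{S}$ and contains $\mathfrak{s}$; maximality of $\mathfrak{s}$ forces $\mathfrak{t} = \mathfrak{s}$, i.e. $\overline{\mathfrak{t}} = 0$, as required.

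The main obstacle I anticipate is twofold, and essentially bookkeeping rather than conceptual: first, guaranteeing the existence of the maximal ideal $\mathfrak{s}$ outside the finite-dimensional case (the Zorn's lemma verification that $\mathcal{S}$ is closed under unions of chains), and second, establishing cleanly the quotient identity $[\mathfrak{m}/\mathfrak{s}, \mathfrak{q}/\mathfrak{s}]_{\Lie} = ([\mathfrak{m}, \mathfrak{q}]_{\Lie} + \mathfrak{s})/\mathfrak{s}$ together with the behaviour of the intersection under the projection $\mathfrak{q} \twoheadrightarrow \mathfrak{q}/\mathfrak{s}$. Once these are in place, Proposition \ref{proposition2} and the maximality argument close the proof.
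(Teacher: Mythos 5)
Your proposal is correct and follows essentially the same route as the paper: both take a Zorn's-lemma-maximal two-sided ideal $\mathfrak{s} \subseteq \mathfrak{m}$ with $\mathfrak{s} \cap [\mathfrak{m},\mathfrak{q}]_{\Lie}=0$, pass to the quotient pair via Lemma \ref{lemma1}, and use maximality together with Proposition \ref{proposition2} to conclude the quotient is {\Lie}-stem. The only difference is that you spell out the bookkeeping (the correspondence theorem and the identity $[\mathfrak{m}/\mathfrak{s}, \mathfrak{q}/\mathfrak{s}]_{\Lie} = ([\mathfrak{m},\mathfrak{q}]_{\Lie}+\mathfrak{s})/\mathfrak{s}$) that the paper leaves implicit in the step $\mathfrak{h}\cap[\mathfrak{m},\mathfrak{q}]_{\Lie}\subseteq\mathfrak{s}\cap[\mathfrak{m},\mathfrak{q}]_{\Lie}$.
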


\begin{proof}
Let $ (\mathfrak{m}, \mathfrak{q}) $ be an arbitrary pair of Leibniz algebras in $ \mathcal{C} $ and $  \mathcal{A}=\lbrace\mathfrak{s} \mid \mathfrak{s}\trianglelefteq \mathfrak{q}, \mathfrak{s}\subseteq \mathfrak{m}, \mathfrak{s}\cap [\mathfrak{m}, \mathfrak{q}]_{\Lie} =0 \rbrace $. The set $ \mathcal{A}$ is non-empty because it contains at least the zero ideal. We define a partial ordering on $\mathcal{A}$ by inclusion and
evidently, by Zorn's lemma, we can find a maximal two-sided ideal $  \mathfrak{s}$ in $\mathcal{A}$. Since $ \mathfrak{s}\cap [\mathfrak{m}, \mathfrak{q}]_{\Lie} =0 $, it follows from Lemma \ref{lemma1} that $(\mathfrak{m/s}, \mathfrak{q/s})\in \mathcal{C}$. Now, suppose that $ \mathfrak{h/s} $ is a two-sided ideal of $ \mathfrak{q/s}$
contained in $ \mathfrak{m/s} $ such that $ \mathfrak{h/s}\cap [\mathfrak{m/s}, \mathfrak{q/s}]_{\Lie} =0$.  Note that such a $\mathfrak{h/s}$ always exists; for instance $\mathfrak{h}= \mathfrak{s}$. Then we have
$ \mathfrak{h}\cap [\mathfrak{m}, \mathfrak{q}]_{\Lie}\subseteq \mathfrak{s}\cap [\mathfrak{m}, \mathfrak{q}]_{\Lie} =0$ and so $ \mathfrak{h}\in \mathcal{A} $. Moreover, $ \mathfrak{s}\subseteq \mathfrak{h} $, so by the maximality of $ \mathfrak{s} $, it follows that $\mathfrak{h}=\mathfrak{s}  $ and then $ \mathfrak{h/s}=0 $. Therefore, by virtue of Proposition \ref{proposition2}, $(\mathfrak{m/s}, \mathfrak{q/s}) $ is a {\Lie}-stem pair of Leibniz algebras, as required.
\end{proof}

One of the main results in this paper is the following
\begin{theorem}\label{theorem2}
Let  $\mathcal{C} $ be a {\Lie}-isoclinic family of finite dimensional pairs of Leibniz algebras and $ (\mathfrak{n}, \mathfrak{p})\in \mathcal{C}$. Then $ (\mathfrak{n}, \mathfrak{p})$ is a {\Lie}-stem pair if and only if
${\rm dim}(\mathfrak{p})= {\rm min} \lbrace {\rm dim}(\mathfrak{q}) | (\mathfrak{m}, \mathfrak{q})\in \mathcal{C}\rbrace$.
\end{theorem}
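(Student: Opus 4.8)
The plan is to write $\dim(\mathfrak{q})$ as a sum of quantities that are constant along the family $\mathcal{C}$ plus a single non-negative term that vanishes exactly on the {\Lie}-stem pairs. For an arbitrary pair $(\mathfrak{m},\mathfrak{q}) \in \mathcal{C}$ abbreviate $Z = Z_{\Lie}(\mathfrak{m},\mathfrak{q})$ and $M = [\mathfrak{m},\mathfrak{q}]_{\Lie}$, and start from the decomposition
\[
\dim(\mathfrak{q}) = \dim\!\left(\mathfrak{q}/Z\right) + \dim(Z \cap M) + \dim\!\left(Z/(Z\cap M)\right).
\]
I would show that the first two summands are the same for all pairs of $\mathcal{C}$, that the third is $\geq 0$, and that it equals $0$ if and only if $Z \subseteq M$, i.e. if and only if the pair is {\Lie}-stem.

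The first invariant is immediate from Definition \ref{isoclinism1}: for any two pairs of $\mathcal{C}$ the isomorphism $\alpha$ yields $\dim(\mathfrak{q}_1/Z_1) = \dim(\mathfrak{q}_2/Z_2)$, so $\dim(\mathfrak{q}/Z)$ is a family invariant, say $c_1$ (and likewise $\beta$ gives $\dim(M_1)=\dim(M_2)$). The \emph{main obstacle} is the second invariant, namely proving that $\beta$ restricts to an isomorphism $Z_1 \cap M_1 \to Z_2 \cap M_2$, so that $\dim(Z\cap M)$ is a second family invariant $c_2$. For this I would first establish the compatibility $\pi_2 \circ \beta = \alpha \circ \pi_1$ on $M_1$: writing $w \in M_1$ as a finite sum $\sum_k([a_k,b_k]+[b_k,a_k])$ with $a_k \in \mathfrak{m}_1$, $b_k \in \mathfrak{q}_1$, the commutativity of diagram (\ref{isoclinism}) gives $\beta(w) = \sum_k C_2(\alpha_{|}\pi_1(a_k),\alpha\pi_1(b_k))$; then applying $\pi_2$ and using that both $\pi_2$ and $\alpha$ are Leibniz homomorphisms converts this back into $\alpha(\pi_1(w))$. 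Granting the compatibility, for $w \in M_1$ one has the chain of equivalences: $w \in Z_1 \iff \pi_1(w)=0 \iff \alpha\pi_1(w)=0$ (injectivity of $\alpha$) $\iff \pi_2(\beta(w))=0 \iff \beta(w) \in Z_2$; since $\beta(M_1)=M_2$, this gives $\beta(Z_1 \cap M_1) = Z_2 \cap M_2$, hence $\dim(Z\cap M)=c_2$ is constant on $\mathcal{C}$.

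With $c_1$ and $c_2$ in hand, the displayed identity becomes
\[
\dim(\mathfrak{q}) = c_1 + c_2 + \dim\!\left(Z/(Z\cap M)\right) \geq c_1 + c_2
\]
for every $(\mathfrak{m},\mathfrak{q}) \in \mathcal{C}$, with equality precisely when $Z \subseteq M$, that is, precisely when the pair is {\Lie}-stem. By Theorem \ref{theorem1} the family $\mathcal{C}$ contains at least one {\Lie}-stem pair, which therefore attains the value $c_1 + c_2$; combined with the lower bound this shows $\min\{\dim(\mathfrak{q}) \mid (\mathfrak{m},\mathfrak{q}) \in \mathcal{C}\} = c_1 + c_2$. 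Consequently $(\mathfrak{n},\mathfrak{p})$ is a {\Lie}-stem pair if and only if $\dim(\mathfrak{p}) = c_1 + c_2 = \min\{\dim(\mathfrak{q}) \mid (\mathfrak{m},\mathfrak{q})\in\mathcal{C}\}$, which is the desired equivalence. The only place requiring genuine care is the compatibility $\pi_2\circ\beta = \alpha\circ\pi_1$ on $M_1$; everything else is bookkeeping with dimensions.
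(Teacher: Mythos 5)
Your proof is correct, and it reaches the theorem by a genuinely different organization than the paper's. The paper proves the two implications separately: for ``stem $\Rightarrow$ minimal'' it runs the chain of isomorphisms $\frac{[\mathfrak{m},\mathfrak{q}]_{\Lie}}{[\mathfrak{m},\mathfrak{q}]_{\Lie}\cap Z_{\Lie}(\mathfrak{m},\mathfrak{q})}\cong\bigl[\frac{\mathfrak{m}}{Z_{\Lie}(\mathfrak{m},\mathfrak{q})},\frac{\mathfrak{q}}{Z_{\Lie}(\mathfrak{m},\mathfrak{q})}\bigr]_{\Lie}\cong\bigl[\frac{\mathfrak{n}}{Z_{\Lie}(\mathfrak{n},\mathfrak{p})},\frac{\mathfrak{p}}{Z_{\Lie}(\mathfrak{n},\mathfrak{p})}\bigr]_{\Lie}\cong\frac{[\mathfrak{n},\mathfrak{p}]_{\Lie}}{Z_{\Lie}(\mathfrak{n},\mathfrak{p})}$, where the last step is exactly where the stem hypothesis enters, and combines it with $\beta$ to get $\dim Z_{\Lie}(\mathfrak{n},\mathfrak{p})=\dim\bigl([\mathfrak{m},\mathfrak{q}]_{\Lie}\cap Z_{\Lie}(\mathfrak{m},\mathfrak{q})\bigr)\le\dim Z_{\Lie}(\mathfrak{m},\mathfrak{q})$; for ``minimal $\Rightarrow$ stem'' it re-enters the \emph{proof} of Theorem \ref{theorem1}, extracting an ideal $\mathfrak{t}$ with $(\mathfrak{n},\mathfrak{p})\sim(\mathfrak{n}/\mathfrak{t},\mathfrak{p}/\mathfrak{t})$ and $Z_{\Lie}(\mathfrak{n},\mathfrak{p})=\bigl([\mathfrak{n},\mathfrak{p}]_{\Lie}\cap Z_{\Lie}(\mathfrak{n},\mathfrak{p})\bigr)\oplus\mathfrak{t}$, which minimality forces to vanish. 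You instead establish, with no stem hypothesis anywhere, that $\dim(\mathfrak{q}/Z)$ and $\dim(Z\cap M)$ are invariants of the whole family --- your compatibility $\pi_2\circ\beta=\alpha\circ\pi_1$ on $M_1$ is essentially the paper's unlabelled Proposition following Definition \ref{isoclinism1}, proved by an element computation --- and then both implications fall out of the single identity $\dim\mathfrak{q}=c_1+c_2+\dim\bigl(Z/(Z\cap M)\bigr)$, with Theorem \ref{theorem1} used only as a black box. What each approach buys: yours localizes the role of the stem condition to one vanishing term, avoids depending on structure ($Z=(Z\cap M)\oplus\mathfrak{t}$) that lives in the proof rather than the statement of Theorem \ref{theorem1}, makes the minimum explicit as $c_1+c_2$, and gives Corollary \ref{corollary2} for free, since for stem pairs $Z_i=Z_i\cap M_i$ and your key lemma says $\beta(Z_1\cap M_1)=Z_2\cap M_2$ (the paper obtains this separately via the Snake Lemma); the paper's route is shorter on the page because it leans on machinery it has already built. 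One step you should make explicit: when you write $w\in M_1$ as a finite sum $\sum_k\bigl([a_k,b_k]+[b_k,a_k]\bigr)$ you are using that these elements \emph{span} $[\mathfrak{m}_1,\mathfrak{q}_1]_{\Lie}$ as a vector space, not merely generate it as an ideal. This is true --- by the Leibniz identity, left products against such elements vanish and right products are again sums of such elements, so the span is already a two-sided ideal --- and the paper uses it silently as well, but your argument genuinely needs it, so it deserves a line.
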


\begin{proof}
Let $ (\mathfrak{m}, \mathfrak{q})$  and $(\mathfrak{n}, \mathfrak{p})$ be arbitrary pairs in $\mathcal{C}$ and assume that $(\mathfrak{n}, \mathfrak{p})$  is a {\Lie}-stem pair such that $\mathfrak{p}$ is finite-dimensional. Then we have
\begin{align*}
\frac{[\mathfrak{m}, \mathfrak{q}]_{\Lie}}{[\mathfrak{m}, \mathfrak{q}]_{\Lie}\cap Z_{\Lie}(\mathfrak{m}, \mathfrak{q})}&\cong \frac{[\mathfrak{m}, \mathfrak{q}]_{\Lie}+Z_{\Lie}(\mathfrak{m}, \mathfrak{q})}{Z_{\Lie}(\mathfrak{m}, \mathfrak{q})}\\
&\cong \left[\frac{\mathfrak{m}}{Z_{\Lie}(\mathfrak{m}, \mathfrak{q})},\frac{\mathfrak{q}}{Z_{\Lie}(\mathfrak{m}, \mathfrak{q})}\right]_{\Lie}\\
&\cong \left[\frac{\mathfrak{n}}{Z_{\Lie}(\mathfrak{n}, \mathfrak{p})},\frac{\mathfrak{p}}{Z_{\Lie}(\mathfrak{n}, \mathfrak{p})} \right]_{\Lie}\\
&\cong \frac{[\mathfrak{n}, \mathfrak{p}]_{\Lie}}{Z_{\Lie}(\mathfrak{n}, \mathfrak{p})},
\end{align*}
and $[\mathfrak{m}, \mathfrak{q}]_{\Lie}\cong [\mathfrak{n}, \mathfrak{p}]_{\Lie}$. Therefore ${\rm dim}\left(Z_{\Lie}(\mathfrak{n}, \mathfrak{p})\right)= {\rm dim} \left([\mathfrak{m}, \mathfrak{q}]_{\Lie}\cap Z_{\Lie}(\mathfrak{m}, \mathfrak{q})\right)\leq$ ${\rm dim} \left( Z_{\Lie}(\mathfrak{m}, \mathfrak{q})\right)$.
On the other hand $\frac{\mathfrak{p}}{Z_{\Lie}(\mathfrak{n}, \mathfrak{p})} \cong \frac{\mathfrak{q}}{Z_{\Lie}(\mathfrak{m}, \mathfrak{q})}$. Therefore ${\rm dim} \left( \mathfrak{p} \right) \leq  {\rm dim} \left( \mathfrak{q}\right)$.

Conversely, let $(\mathfrak{n}, \mathfrak{p})$ be in the family  $ \mathcal{C} $ such that $\mathfrak{p} $ has the minimum dimension. Owing to Theorem \ref{theorem1} there is a two-sided ideal $\mathfrak{t}$ of $\mathfrak{p}$ contained in $Z_{\Lie}(\mathfrak{n}, \mathfrak{p})$ such that $(\mathfrak{n}, \mathfrak{p})\sim (\frac{\mathfrak{n}}{\mathfrak{t}}, \frac{\mathfrak{p}}{\mathfrak{t}})$ and $Z _{\Lie}(\mathfrak{n}, \mathfrak{p})= \left([\mathfrak{n}, \mathfrak{p}]_{\Lie} \cap Z_{\Lie}(\mathfrak{n}, \mathfrak{p}) \right)\oplus  \mathfrak{t}$. But $\mathfrak{p}$ has minimum dimension, which implies that $\mathfrak{t}=0$, therefore $Z_{\Lie}(\mathfrak{n}, \mathfrak{p})\subseteq [\mathfrak{n}, \mathfrak{p}]_{\Lie}$ and this completes the proof.
\end{proof}

The above Theorem provides the following interesting consequence which will be used in the sequel and our main result in the last section.

\begin{corollary} \label{corollary2}
If $(\mathfrak{m}, \mathfrak{q})$ and $(\mathfrak{n}, \mathfrak{p})$ are two {\Lie}-isoclinic {\Lie}-stem pairs of Leibniz algebras then $Z_{\Lie}(\mathfrak{m}, \mathfrak{q})\cong Z_{\Lie}(\mathfrak{n}, \mathfrak{p})$.
\end{corollary}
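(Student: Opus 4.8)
The plan is to upgrade the abstract isomorphisms furnished by a {\Lie}-isoclinism into an honest isomorphism of the two {\Lie}-centers, realised by the map $\beta$ itself. Write $Z_1 = Z_{\Lie}(\mathfrak{m}, \mathfrak{q})$, $Z_2 = Z_{\Lie}(\mathfrak{n}, \mathfrak{p})$, and let $\pi_1 : \mathfrak{q} \twoheadrightarrow \mathfrak{q}/Z_1$, $\pi_2 : \mathfrak{p} \twoheadrightarrow \mathfrak{p}/Z_2$ be the canonical projections, so that $\Ker \pi_i = Z_i$. The first observation is that, because both pairs are {\Lie}-stem, $Z_i$ is contained in the corresponding {\Lie}-commutator; hence $Z_1 = Z_1 \cap [\mathfrak{m}, \mathfrak{q}]_{\Lie} = \Ker\!\left(\pi_1|_{[\mathfrak{m}, \mathfrak{q}]_{\Lie}}\right)$ and similarly $Z_2 = \Ker\!\left(\pi_2|_{[\mathfrak{n}, \mathfrak{p}]_{\Lie}}\right)$. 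This reduces the corollary to showing that $\beta$ carries the first kernel onto the second.

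The key step is to prove the compatibility relation $\pi_2 \circ \beta = \alpha \circ \pi_1$ on $[\mathfrak{m}, \mathfrak{q}]_{\Lie}$. I would check this on a generator $z = [m, q] + [q, m] = C_1(\overline m, \overline q)$, where $\overline m = \pi_1(m)$, $\overline q = \pi_1(q)$. On one side, the defining square of the {\Lie}-isoclinism gives $\beta(z) = C_2(\alpha \overline m, \alpha \overline q)$, and applying $\pi_2$ rewrites this as the {\Lie}-bracket $[\alpha \overline m, \alpha \overline q] + [\alpha \overline q, \alpha \overline m]$ in $\mathfrak{p}/Z_2$, using $\pi_2 \circ C_2 = (\,\text{induced {\Lie}-bracket}\,)$. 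On the other side, since $\alpha$ is a homomorphism of Leibniz algebras it preserves the induced {\Lie}-bracket on the quotients, so $\alpha(\pi_1(z)) = \alpha\big([\overline m, \overline q] + [\overline q, \overline m]\big)$ equals the very same expression. The two agree on generators, hence by linearity on all of $[\mathfrak{m}, \mathfrak{q}]_{\Lie}$.

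Granting this square, the conclusion is immediate: for $z \in [\mathfrak{m}, \mathfrak{q}]_{\Lie}$ one has $\pi_2(\beta(z)) = \alpha(\pi_1(z))$, and since $\alpha$ is injective, $\beta(z) \in \Ker \pi_2$ if and only if $z \in \Ker \pi_1$; as $\beta$ is bijective this gives $\beta(Z_1) = Z_2$. Therefore the restriction $\beta|_{Z_1} : Z_{\Lie}(\mathfrak{m}, \mathfrak{q}) \to Z_{\Lie}(\mathfrak{n}, \mathfrak{p})$ is the desired isomorphism. In the finite-dimensional situation one may alternatively argue purely numerically: the chain of isomorphisms in the proof of Theorem~\ref{theorem2}, combined with $[\mathfrak{m}, \mathfrak{q}]_{\Lie} \cong [\mathfrak{n}, \mathfrak{p}]_{\Lie}$ and the {\Lie}-stem hypotheses, forces $\dim Z_1 = \dim Z_2$; this, however, only yields a vector-space isomorphism, which is why I prefer the structural route through $\beta$.

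I expect the main obstacle to be the compatibility relation $\pi_2 \circ \beta = \alpha \circ \pi_1$. Although it is ultimately a diagram chase, it rests on correctly identifying $\pi_i \circ C_i$ with the {\Lie}-bracket induced on the quotient and on exploiting that $\alpha$ preserves that bracket, which is precisely where the genuine {\Lie}-isoclinism condition intervenes rather than the mere existence of isomorphisms $\alpha$ and $\beta$. Everything else — recognising each {\Lie}-center as the kernel of a projection via the {\Lie}-stem property, and transporting kernels along the bijection $\beta$ — is formal.
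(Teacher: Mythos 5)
Your proposal is correct and takes essentially the same route as the paper: both realise the isomorphism as the restriction of $\beta$ to the {\Lie}-centers, using the compatibility relation $\pi_2 \circ \beta = \alpha \circ \pi_1$ on $[\mathfrak{m},\mathfrak{q}]_{\Lie}$ together with the {\Lie}-stem hypothesis $Z_{\Lie}(\mathfrak{m},\mathfrak{q}) \subseteq [\mathfrak{m},\mathfrak{q}]_{\Lie}$. The only differences are in bookkeeping: you verify the compatibility square explicitly on generators (the paper cites it from the proof of Theorem \ref{theorem2}), and you replace the paper's Snake Lemma and pull-back argument by a direct kernel-transport argument from the injectivity of $\alpha$ and the bijectivity of $\beta$.
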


\begin{proof}
Let $(\mathfrak{m}, \mathfrak{q})$ and $(\mathfrak{n}, \mathfrak{p})$ be two {\Lie}-isoclinic pairs of Leibniz algebras. In view of proof of Theorem \ref{theorem2}  and isomorphism $\beta: [\mathfrak{m}, \mathfrak{q}]_{\Lie}\longrightarrow[\mathfrak{n}, \mathfrak{p}]_{\Lie}$ we have the following commutative diagram with exact rows:
$$\begin{CD}
0@>>> Z_{\Lie}(\mathfrak{m}, \mathfrak{q})@>>> [\mathfrak{m}, \mathfrak{q}]_{\Lie}@>{\pi_1}>>
\frac{ [\mathfrak{m}, \mathfrak{q}]_{\Lie}}{Z_{\Lie}(\mathfrak{m}, \mathfrak{q})}@>>>0\\
&& @VV{\beta_{|}}V @V{\wr}V{\beta}V @V{\wr}V{\alpha}V\\
~~ 0@>>> Z_{\Lie}(\mathfrak{n}, \mathfrak{p})@>>>[\mathfrak{n}, \mathfrak{p}]_{\Lie}@>{\pi_2}>>
\frac{[\mathfrak{n}, \mathfrak{p}]_{\Lie}}{Z_{\Lie}(\mathfrak{n}, \mathfrak{p})} @>>>0.
\end{CD}$$\\
where $\beta \left( Z_{\Lie}(\mathfrak{m}, \mathfrak{q}) \right) \subseteq Z_{\Lie}(\mathfrak{n}, \mathfrak{p})$ since for all $x\in  [\mathfrak{m}, \mathfrak{q}]_{\Lie}$, $\alpha \left(x+Z_{\Lie}(\mathfrak{m}, \mathfrak{q})\right) = \beta (x) +  Z_{\Lie}(\mathfrak{n}, \mathfrak{p})$. Hence, for $x \in Z_{\Lie}(\mathfrak{m}, \mathfrak{q})$, $0 = \alpha  \left( \pi_1(x) \right) = \beta(x) + Z_{\Lie}(\mathfrak{n}, \mathfrak{p})$, so $\beta(x) \in Z_{\Lie}(\mathfrak{n}, \mathfrak{p})$.
Now the Snake Lemma \cite{RefBB} yields  $\beta_{|}$ is a surjective homomorphism and so $\beta ( Z_{\Lie}(\mathfrak{m}, \mathfrak{q}))= Z_{\Lie}(\mathfrak{n}, \mathfrak{p})$. Moreover the left hand square is a pull-back diagram, then  $\beta_{\mid}$ is a monomorphism. Therefore  $Z_{\Lie}(\mathfrak{m}, \mathfrak{q})\cong Z_{\Lie}(\mathfrak{n}, \mathfrak{p})$.
\end{proof}

\section{Factor sets of a pair of Leibniz algebras}
\label{sec:2}
 Chevalley and Eilenberg in \cite{RefCE} defined the factor sets for Lie algebras and now we recall factor sets for Leibniz algebras from \cite{RefLSW} and we analyze the interplay with  the concepts relative to the Liezation functor for a pair of Leibniz algebras.

Let $0\longrightarrow \mathfrak{m}\xrightarrow {\subseteq}\mathfrak{q}\xrightarrow{\pi}\mathfrak{q^*} \longrightarrow 0$ be a non-abelian extension of Leibniz algebras \cite[Definition 2.5]{RefLSW}. We  choose  a  splitting $\tau: \mathfrak{q^*}\longrightarrow \mathfrak{q}$, that is a linear map  such that  $ \pi \circ \tau = {\sf Id}_{\mathfrak{q^*}}$.  For each  $x\in \mathfrak{q^*}$  we have two linear maps $L_x, R_x: \mathfrak{m}\longrightarrow \mathfrak{m}$ given by $L_x(m)=  [m,  \tau(x)]$ and $R_x(m)=[\tau(x), m]$.
Associated to any  pair of elements $ x, y\in \mathfrak{q^*} $, there is an element $f(x, y)\in \mathfrak{m}$  such that
$$f(x, y) = [\tau(x), \tau(y)]-\tau [x, y].$$

The  linear map $f$  is called  the  {\it factor set} corresponding to the function  $\tau$.

For any $ x, y, z\in \mathfrak{q^*}$, the following identities concerning factor sets hold:
\begin{align*}
[[\tau(x), \tau(y)], \tau(z)]&=[f(x, y), \tau(z)]+[\tau([x,y]), \tau(z)]\\
&=L_z(f(x,y))+f([x,y], z)+\tau([[x, y], z]),
\end{align*}
\begin{align*}
[[\tau(x), \tau(z)], \tau(y)]&=[f(x, z), \tau(y)]+[\tau([x,z]), \tau(y)]\\
&=L_y(f(x,z))+f([x,z], y)+\tau([[x, z], y]),
\end{align*}
\begin{align*}
[\tau(x), [\tau(y), \tau(z)]]&=[\tau(x), f(y, z)]+[\tau(x), \tau([y, z])]\\
&=R_x(f(y, z))+f(x, [y, z])+\tau([x, [y, z]]).
\end{align*}
From the above identities, the following equation is immediately derived:
 \begin{equation} \label{factor set id}
f([x,y], z)-f([x,z], y)-f(x, [y, z])+ L_z(f(x,y))- L_y(f(x,z))-R_x(f(y, z))=0
\end{equation}
Given a splitting $\tau$, we  define on the $\mathbb{K}$-vector space $\mathfrak{m} \oplus \mathfrak{q^*}$  the bracket operation
\begin{equation*}
 [(m_1, x_1), (m_2, x_2)] = ([m_1, m_2]+R_{x_1}(m_2)+L_{x_2}(m_1)+f(x_1, x_2), [x_1, x_2]).
\end{equation*}
A routine computation having in mind equation (\ref{factor set id}) shows that $(\mathfrak{m} \oplus \mathfrak{q^*}, [-,-])$ is a Leibniz algebra, which will be denoted $ \mathfrak{m}\times_f \mathfrak{q^*}$.

Note that $\tau$ is a homomorphism if and only if $f(x, y) = 0$, for all $x, y\in \mathfrak{q^*},$ that is $f$ measures the deficiency of $\tau$ to be a homomorphism.
If the exact sequence $0\longrightarrow \mathfrak{m}\xrightarrow {i}\mathfrak{q}\xrightarrow{\psi}\mathfrak{q^*} \longrightarrow 0$ splits by a homomorphism  $\tau : \mathfrak{q^*} \to \mathfrak{q}$, then $\mathfrak{m}$ is endowed with an action from $\mathfrak{q^*}$ given by $[x,m]=[\tau(x), i(m)]_{\mathfrak{q}}, [m,x]=[i(m), \tau(x)]_{\mathfrak{q}}, m \in \mathfrak{m}, x \in \mathfrak{q^*}$. Hence the semi-direct product $\mathfrak{m} \rtimes \mathfrak{q^*}$ can be constructed, which gives rise to the split extension $0\longrightarrow \mathfrak{m}\xrightarrow{} \mathfrak{m} \rtimes \mathfrak{q^*} \xrightarrow{pr}\mathfrak{q^*} \longrightarrow 0$ \cite{RefLP}.

\begin{definition}
We say that two pairs of Leibniz algebras $(\mathfrak{m}, \mathfrak{q})$ and $(\mathfrak{n}, \mathfrak{p})$ are isomorphic if there exist an isomorphism $ \varphi: \mathfrak{q}\rightarrow \mathfrak{p} $ such that $ \varphi_{|{\mathfrak{m}}}:\mathfrak{m}\cong \mathfrak{n} $.
\end{definition}

\begin{lemma}\label{lemma2}
Let $(\mathfrak{m}, \mathfrak{q})$ be a pair of Leibniz algebras. Then there exists the factor set $f:\dfrac{\mathfrak{q}}{\mathfrak{m}}\times \dfrac{\mathfrak{q}}{\mathfrak{m}}\longrightarrow \mathfrak{m}$ such that $\left( \Ker(\pi), \mathfrak{m}\times_f \dfrac{\mathfrak{q}}{\mathfrak{m}} \right) \cong \left( \mathfrak{m}, \mathfrak{q} \right)$, where $\pi : \mathfrak{m}\times_f \dfrac{\mathfrak{q}}{\mathfrak{m}} \twoheadrightarrow \dfrac{\mathfrak{q}}{\mathfrak{m}}$   is the canonical projection.
\end{lemma}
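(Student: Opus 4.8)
The plan is to feed the factor set construction recalled above back into the canonical short exact sequence
$$0 \longrightarrow \mathfrak{m} \xrightarrow{\subseteq} \mathfrak{q} \xrightarrow{p} \frac{\mathfrak{q}}{\mathfrak{m}} \longrightarrow 0,$$
where $p$ denotes the quotient map and $\mathfrak{q}^* = \frac{\mathfrak{q}}{\mathfrak{m}}$. Since every $\mathbb{K}$-vector space admits a linear complement, first choose a $\mathbb{K}$-linear splitting $\tau : \frac{\mathfrak{q}}{\mathfrak{m}} \to \mathfrak{q}$ with $p \circ \tau = \mathsf{Id}$, and define the associated factor set $f(x,y) = [\tau(x),\tau(y)] - \tau([x,y])$. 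Applying $p$ gives $p(f(x,y)) = [x,y] - [x,y] = 0$, so $f(x,y) \in \mathfrak{m}$ and $f$ is a genuine factor set; likewise the maps $L_x, R_x : \mathfrak{m} \to \mathfrak{m}$ are well defined precisely because $\mathfrak{m}$ is a two-sided ideal. Consequently identity (\ref{factor set id}) holds automatically, and by the discussion preceding the Lemma the prescribed bracket endows $\mathfrak{m} \times_f \frac{\mathfrak{q}}{\mathfrak{m}}$ with a Leibniz structure, so the Leibniz identity need not be rechecked here.

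The core step is to produce the isomorphism of pairs. I would define $\varphi : \mathfrak{m} \times_f \frac{\mathfrak{q}}{\mathfrak{m}} \to \mathfrak{q}$ by $\varphi(m, x) = m + \tau(x)$. This is a linear bijection: its inverse sends $q$ to $\left(q - \tau(p(q)),\, p(q)\right)$, which is well defined because $p\left(q - \tau(p(q))\right) = 0$ forces $q - \tau(p(q)) \in \mathfrak{m}$. To check that $\varphi$ preserves brackets, expand $\varphi\left([(m_1,x_1),(m_2,x_2)]\right)$ using the defining bracket of $\mathfrak{m} \times_f \frac{\mathfrak{q}}{\mathfrak{m}}$ and substitute $R_{x_1}(m_2) = [\tau(x_1), m_2]$, $L_{x_2}(m_1) = [m_1, \tau(x_2)]$ and $f(x_1,x_2) = [\tau(x_1),\tau(x_2)] - \tau([x_1,x_2])$. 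The term $\tau([x_1,x_2])$ introduced by the second component cancels against the $-\tau([x_1,x_2])$ inside $f$, and what survives is exactly
$$[m_1, m_2] + [\tau(x_1), m_2] + [m_1, \tau(x_2)] + [\tau(x_1),\tau(x_2)] = [m_1 + \tau(x_1),\, m_2 + \tau(x_2)],$$
i.e. $[\varphi(m_1,x_1), \varphi(m_2,x_2)]$. Hence $\varphi$ is a Leibniz algebra isomorphism.

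For the pair condition, identify $\Ker(\pi) = \{(m, 0) : m \in \mathfrak{m}\}$, the kernel of the projection $\pi$ of the product. Then $\varphi(m, 0) = m + \tau(0) = m$, so $\varphi$ carries $\Ker(\pi)$ isomorphically onto $\mathfrak{m}$; note also that $\varphi$ intertwines the two projections, since $p(\varphi(m,x)) = p(\tau(x)) = x = \pi(m,x)$. Together with $\varphi$ being an isomorphism of the ambient algebras, the restriction $\varphi_{|\Ker(\pi)} : \Ker(\pi) \cong \mathfrak{m}$ is exactly what the definition of isomorphic pairs requires, giving $\left(\Ker(\pi), \mathfrak{m} \times_f \frac{\mathfrak{q}}{\mathfrak{m}}\right) \cong (\mathfrak{m}, \mathfrak{q})$.

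I do not anticipate a serious obstacle. The only point needing care is the bracket-preservation computation, which becomes transparent once the cancellation of the $\tau([x_1,x_2])$ term is observed, and the well-definedness of $L_x, R_x$, which uses nothing beyond $\mathfrak{m}$ being a two-sided ideal. The section $\tau$ is non-canonical, so $f$ is determined only up to the usual equivalence of factor sets, but this affects neither the existence claim nor the isomorphism type of the reconstructed pair.
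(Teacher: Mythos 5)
Your proof is correct and takes essentially the same approach as the paper: both arguments choose a linear section of the quotient map (the paper via an explicit vector space complement $\mathfrak{h}$ of $\mathfrak{m}$ in $\mathfrak{q}$, you via an abstract splitting $\tau$), define the factor set $f(\bar{x},\bar{y})=[\tau(\bar{x}),\tau(\bar{y})]-\tau([\bar{x},\bar{y}])$, and exhibit the isomorphism $(m,\bar{x})\mapsto m+\tau(\bar{x})$, whose restriction gives $\Ker(\pi)\cong\mathfrak{m}$. You simply make explicit the inverse map and the bracket-preservation computation that the paper dismisses as ``easy to check.''
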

\begin{proof}
Let $ \mathfrak{h} $ be a vector space complement of $ \mathfrak{m} $ in $ \mathfrak{q} $ and $ \rho :\dfrac{\mathfrak{q}}{\mathfrak{m}}\longrightarrow \mathfrak{q} $ be a linear map given by $ \rho(\bar{x})=h$, where $ x=h+m $ with $h\in \mathfrak{h}$ and  $m\in \mathfrak{m}$,  which is a splitting of the extension $0 \to \mathfrak{m} \to \mathfrak{q} \overset{pr} \to \dfrac{\mathfrak{q}}{\mathfrak{m}} \to 0$.

Now, we define $ f:\dfrac{\mathfrak{q}}{\mathfrak{m}}\times \dfrac{\mathfrak{q}}{\mathfrak{m}}\longrightarrow \mathfrak{m} $ by $ f(\bar{x}, \bar{y})=[\rho(\bar{x}), \rho(\bar{y})] -\rho([\bar{x}, \bar{y}])$  that is well-defined, since $pr([\rho(\bar{x}), \rho(\bar{y})])=[\bar{x}, \bar{y}]= pr (\rho([\bar{x}, \bar{y}])) $. Hence $ f $ is the factor set and $\mathfrak{m}\times_f \dfrac{\mathfrak{q}}{\mathfrak{m}} $ is  isomorphic to $\mathfrak{q}$ via $(m, \bar{x})\longmapsto m+\rho (\bar{x})$.  Also, it is easy to check that  $f|_{\Ker(\pi)}: \Ker(\pi) \cong  \mathfrak{m}$ and this  completes the proof.
\end{proof}

For the pair of Leibniz algebras $ ( \mathfrak{m}, \mathfrak{q}) $ we consider the extension  \[0\longrightarrow {Z_{\Lie}(\mathfrak{m}, \mathfrak{q})}\xrightarrow {\subseteq}\mathfrak{m}\xrightarrow{\pi}{\mathfrak{m}}/{Z_{\Lie}(\mathfrak{m}, \mathfrak{q})} \longrightarrow 0 \]  and the factor set  $ f $ corresponding to the splitting $ \tau:{\mathfrak{m}}/{Z_{\Lie}(\mathfrak{m}, \mathfrak{q})}\longrightarrow \mathfrak{m}  $ given by Lemma \ref{lemma2}. Moreover $ {Z_{\Lie}(\mathfrak{m}, \mathfrak{q})}\times_f {\mathfrak{m}}/{Z_{\Lie}(\mathfrak{m}, \mathfrak{q})} $ is a Leibniz algebra isomorphic to $\mathfrak{m}$. We henceforth assume that ${\mathfrak{m}}_f:=Z_{\Lie}(\mathfrak{m}, \mathfrak{q}) \times_f {\mathfrak{m}}/{Z_{\Lie}(\mathfrak{m}, \mathfrak{q})}$ is given as just described.
It is easy to see that the following map is an isomorphism:
\begin{equation*}
\kappa: Z_{\Lie}(\mathfrak{m}, \mathfrak{q})\longrightarrow Z_{\Lie}^{\mathfrak{m}_f}=\lbrace (z, 0) \mid z\in Z_{\Lie}(\mathfrak{m}, \mathfrak{q})\rbrace, ~~~~ z\longmapsto (z, 0).
\end{equation*}

This notation and the previous Lemma give rise to the following
\begin{proposition} \label{proposition3}
 Let $  (\mathfrak{m}, \mathfrak{q}) $ and $  (\mathfrak{n}, \mathfrak{p}) $ be two {\Lie}-isoclinic {\Lie}-stem pairs of Leibniz algebras. Then there exists a factor set $ f :{\mathfrak{m}}/{{Z_{\Lie}(\mathfrak{m}, \mathfrak{q})}}\times  {\mathfrak{m}}/{{Z_{\Lie}(\mathfrak{m}, \mathfrak{q})}}\to Z_{\Lie}(\mathfrak{m}, \mathfrak{q})$ such that $$ \mathfrak{n}\cong Z_{\Lie}(\mathfrak{m}, \mathfrak{q})\times_f  \dfrac{\mathfrak{m}}{{Z_{\Lie}(\mathfrak{m}, \mathfrak{q})}}={\mathfrak{m}}_f$$
 \end{proposition}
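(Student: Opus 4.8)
The plan is to realize $\mathfrak{n}$ as a non-abelian extension of $\mathfrak{m}/Z_{\Lie}(\mathfrak{m},\mathfrak{q})$ by $Z_{\Lie}(\mathfrak{m},\mathfrak{q})$ by transporting, along the isoclinism isomorphisms, the extension data already attached to $\mathfrak{n}$ itself. First I would record the structural collapse forced by the Lie-stem hypothesis. A symmetrized bracket satisfies $[a,b]+[b,a]=[a+b,a+b]-[a,a]-[b,b]$, and $[x,[w,w]]=[[x,w],w]-[[x,w],w]=0$ for all $x,w\in\mathfrak{q}$; hence every generator $[m,q]+[q,m]$ of $[\mathfrak{m},\mathfrak{q}]_{\Lie}$, and so by linearity every element of $[\mathfrak{m},\mathfrak{q}]_{\Lie}$, is left-annihilated by $\mathfrak{q}$, i.e. $[\mathfrak{q},[\mathfrak{m},\mathfrak{q}]_{\Lie}]=0$. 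Because the pair is Lie-stem we have $Z_{\Lie}(\mathfrak{m},\mathfrak{q})\subseteq[\mathfrak{m},\mathfrak{q}]_{\Lie}$, so for $z\in Z_{\Lie}(\mathfrak{m},\mathfrak{q})$ and $m\in\mathfrak{m}$ the bracket $[m,z]$ vanishes, and then the defining relation $[z,m]+[m,z]=0$ of the Lie-center gives $[z,m]=0$ as well. Thus $Z_{\Lie}(\mathfrak{m},\mathfrak{q})$ is an abelian ideal lying centrally in $\mathfrak{m}$, the actions $L_x,R_x$ of the extension $0\to Z_{\Lie}(\mathfrak{m},\mathfrak{q})\to\mathfrak{m}\to\mathfrak{m}/Z_{\Lie}(\mathfrak{m},\mathfrak{q})\to 0$ are trivial, and the bracket of $\mathfrak{m}_f$ collapses to $[(z_1,x_1),(z_2,x_2)]=(f(x_1,x_2),[x_1,x_2])$. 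The same holds for $(\mathfrak{n},\mathfrak{p})$, and in particular the factor-set identity (\ref{factor set id}) reduces to the plain $2$-cocycle condition $f([x,y],z)-f([x,z],y)-f(x,[y,z])=0$.

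Next I would apply Lemma \ref{lemma2} to the pair $(Z_{\Lie}(\mathfrak{n},\mathfrak{p}),\mathfrak{n})$ to obtain a factor set $g:\mathfrak{n}/Z_{\Lie}(\mathfrak{n},\mathfrak{p})\times\mathfrak{n}/Z_{\Lie}(\mathfrak{n},\mathfrak{p})\to Z_{\Lie}(\mathfrak{n},\mathfrak{p})$ together with an isomorphism $\mathfrak{n}\cong Z_{\Lie}(\mathfrak{n},\mathfrak{p})\times_g\mathfrak{n}/Z_{\Lie}(\mathfrak{n},\mathfrak{p})$. The Lie-isoclinism supplies a Leibniz isomorphism $\alpha_{|}:\mathfrak{m}/Z_{\Lie}(\mathfrak{m},\mathfrak{q})\to\mathfrak{n}/Z_{\Lie}(\mathfrak{n},\mathfrak{p})$ (the restriction of $\alpha$, using $\alpha(\mathfrak{m_1}/Z_{\Lie})=\mathfrak{m_2}/Z_{\Lie}$ from Definition \ref{isoclinism1}), while Corollary \ref{corollary2} supplies an isomorphism $\beta_{|}:Z_{\Lie}(\mathfrak{m},\mathfrak{q})\to Z_{\Lie}(\mathfrak{n},\mathfrak{p})$. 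I would then define
$$f(\bar{x},\bar{y})=\beta_{|}^{-1}\bigl(g(\alpha_{|}(\bar{x}),\alpha_{|}(\bar{y}))\bigr),$$
and verify that $f$ is a factor set: since $\alpha_{|}$ preserves brackets and $\beta_{|}$ is a linear isomorphism, pulling the $2$-cocycle identity for $g$ back through $\alpha_{|}$ and $\beta_{|}^{-1}$ yields the same identity for $f$, which by the first paragraph is exactly the (now action-free) form of (\ref{factor set id}).

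Finally I would check that $\Phi(z,\bar{x})=(\beta_{|}(z),\alpha_{|}(\bar{x}))$ is a Leibniz isomorphism $Z_{\Lie}(\mathfrak{m},\mathfrak{q})\times_f\mathfrak{m}/Z_{\Lie}(\mathfrak{m},\mathfrak{q})\to Z_{\Lie}(\mathfrak{n},\mathfrak{p})\times_g\mathfrak{n}/Z_{\Lie}(\mathfrak{n},\mathfrak{p})$: its second component is a homomorphism because $\alpha_{|}$ is, and its first component matches because $\beta_{|}(f(\bar{x},\bar{y}))=g(\alpha_{|}(\bar{x}),\alpha_{|}(\bar{y}))$ by construction, while the $[z_1,z_2]$ terms vanish on both sides (both centers being abelian). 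Composing $\Phi$ with the isomorphism of the second paragraph gives $\mathfrak{n}\cong Z_{\Lie}(\mathfrak{m},\mathfrak{q})\times_f\mathfrak{m}/Z_{\Lie}(\mathfrak{m},\mathfrak{q})=\mathfrak{m}_f$, as required. The step I expect to be the real crux is the first one: a priori the bracket of $\mathfrak{m}_f$ also involves the actions $L_x,R_x$ of the quotient on the center, and the isoclinism $(\alpha,\beta)$ preserves only symmetrized brackets $[a,b]+[b,a]$, not the one-sided brackets defining those actions. Without the observation that the Lie-stem hypothesis makes the actions trivial and the centers abelian, the transported factor set could not be matched against the fixed extension data of $\mathfrak{m}$; once that reduction is in place, everything else is routine diagram checking.
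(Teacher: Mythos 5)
Your proposal is correct and takes essentially the same route as the paper: both apply Lemma \ref{lemma2} to the extension $0\to Z_{\Lie}(\mathfrak{n},\mathfrak{p})\to\mathfrak{n}\to\mathfrak{n}/Z_{\Lie}(\mathfrak{n},\mathfrak{p})\to 0$ to get $g$, transport it to $f=\beta^{-1}\circ g\circ(\alpha\times\alpha)$ using the restricted isoclinism maps and Corollary \ref{corollary2}, and conclude via the component-wise map $(z,\bar{m})\mapsto(\beta(z),\alpha(\bar{m}))$ giving $\mathfrak{m}_f\cong\mathfrak{n}_g\cong\mathfrak{n}$. The only difference is that you prove explicitly the {\Lie}-stem collapse (that $Z_{\Lie}(\mathfrak{m},\mathfrak{q})$ is two-sided central, so the actions $L_x,R_x$ vanish and the bracket reduces to $(f(x_1,x_2),[x_1,x_2])$), a step the paper buries in ``one readily sees'' even though it is precisely what makes the bracket comparison close, since the isoclinism controls only symmetrized brackets.
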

 \begin{proof}
 Let the pair $(\alpha, \beta) $ be the {\Lie}-isoclinism between $(\mathfrak{m}, \mathfrak{q})$ and $ (\mathfrak{n}, \mathfrak{p}) $. By Corollary \ref{corollary2} we have  the isomorphism $ \beta : {Z_{\Lie}(\mathfrak{m}, \mathfrak{q})} \cong {Z_{\Lie}(\mathfrak{n}, \mathfrak{p})} $ and owing to Lemma \ref{lemma2} there exists a factor set $g:{\mathfrak{n}}/{Z_{\Lie}(\mathfrak{n}, \mathfrak{p})}\times {\mathfrak{n}}/{Z_{\Lie}(\mathfrak{n}, \mathfrak{p})}$ $\longrightarrow {Z_{\Lie}(\mathfrak{n}, \mathfrak{p})}$ such that $ {\mathfrak{n}} \cong {\mathfrak{n}}_g$.   Hence we define the map $ f : {\mathfrak{m}}/{{Z_{\Lie}(\mathfrak{m}, \mathfrak{q})}}\times {\mathfrak{m}}/{{Z_{\Lie}(\mathfrak{m}, \mathfrak{q})}}\longrightarrow Z_{\Lie}(\mathfrak{m}, \mathfrak{q})$ given by $ f(\bar{m_1}, \bar {m_2})=\beta^{-1}\big(g( \alpha \times \alpha(\bar{m_1}, \bar{m_2}))\big)$, for all $ \bar{m_i}\in {\mathfrak{m}}/{{Z_{\Lie}(\mathfrak{m}, \mathfrak{q})}}, i= 1, 2 $, which is $\beta^{-1}$ of the factor set corresponding to $\rho_2 \circ \alpha$, where $\rho_2$ is the splitting of $\pi_2 : \mathfrak{n} \twoheadrightarrow{\mathfrak{n}}/{Z_{\Lie}(\mathfrak{n}, \mathfrak{p})}$.  One readily see that the mapping
\begin{equation*}
 \theta : Z_{\Lie}(\mathfrak{m}, \mathfrak{q})\times_f  \frac{\mathfrak{m}}{{Z_{\Lie}(\mathfrak{m}, \mathfrak{q})}}\longrightarrow Z_{\Lie}(\mathfrak{n}, \mathfrak{p})\times_g  \frac{\mathfrak{n}}{{Z_{\Lie}(\mathfrak{n}, \mathfrak{p})}}
\end{equation*}
defined by $ \theta (z_1, \bar{m_1})=(\beta (z_1), \alpha (\bar{m_1})),  z_1 \in Z_{\Lie}(\mathfrak{m}, \mathfrak{q}), \bar{m_1}  \in \frac{\mathfrak{m}}{{Z_{\Lie}(\mathfrak{m}, \mathfrak{q})}}$, is an isomorphism, as required.
 \end{proof}
We close this section by the following Lemma and Proposition which are of interest in their own account.
\begin{lemma} \label{lemma3}
 Let $ f $ and $ g $ be two factor sets on the pair of Leibniz algebras $ (\mathfrak{m}, \mathfrak{q}) $ and $ (\mathfrak{n}, \mathfrak{p}) $, respectively. If $ \eta: \mathfrak{m}_f\longrightarrow \mathfrak{n}_g$ is an isomorphism such that $ \eta(Z_{\Lie}^{\mathfrak{m}_f})=Z_{\Lie}^{\mathfrak{n}_g}$, then $ \eta $ induces isomorphisms $ \eta_{1}: \mathfrak{m}/ Z_{\Lie}(\mathfrak{m}, \mathfrak{q})\longrightarrow \mathfrak{n}/ Z_{\Lie}(\mathfrak{n}, \mathfrak{p}) $ and $ \eta_{2}:Z_{\Lie}(\mathfrak{m}, \mathfrak{q})\longrightarrow Z_{\Lie}(\mathfrak{n}, \mathfrak{p}) $.
\end{lemma}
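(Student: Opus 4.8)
The plan is to treat the two induced maps separately, exploiting that by construction the underlying vector space of $\mathfrak{m}_f$ splits as $Z_{\Lie}(\mathfrak{m}, \mathfrak{q}) \oplus \mathfrak{m}/Z_{\Lie}(\mathfrak{m}, \mathfrak{q})$ with distinguished ideal $Z_{\Lie}^{\mathfrak{m}_f} = \{(z,0)\}$, and likewise for $\mathfrak{n}_g$. First I would produce $\eta_2$. Since $\eta$ is an isomorphism carrying $Z_{\Lie}^{\mathfrak{m}_f}$ onto $Z_{\Lie}^{\mathfrak{n}_g}$, its restriction $\eta|\colon Z_{\Lie}^{\mathfrak{m}_f}\to Z_{\Lie}^{\mathfrak{n}_g}$ is itself an isomorphism of Leibniz algebras. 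Writing $\kappa_{\mathfrak{m}}\colon Z_{\Lie}(\mathfrak{m},\mathfrak{q})\xrightarrow{\sim} Z_{\Lie}^{\mathfrak{m}_f}$, $z\mapsto (z,0)$, and $\kappa_{\mathfrak{n}}\colon Z_{\Lie}(\mathfrak{n},\mathfrak{p})\xrightarrow{\sim} Z_{\Lie}^{\mathfrak{n}_g}$ for the two instances of the isomorphism $\kappa$ recorded just before Proposition \ref{proposition3}, I simply set $\eta_2 = \kappa_{\mathfrak{n}}^{-1}\circ \eta|\circ \kappa_{\mathfrak{m}}\colon Z_{\Lie}(\mathfrak{m},\mathfrak{q})\to Z_{\Lie}(\mathfrak{n},\mathfrak{p})$, which is an isomorphism as a composite of isomorphisms.

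For $\eta_1$ I would argue via the universal property of the quotient. The projection onto the second coordinate $p_{\mathfrak{m}}\colon \mathfrak{m}_f\to \mathfrak{m}/Z_{\Lie}(\mathfrak{m},\mathfrak{q})$, $(z,\bar x)\mapsto \bar x$, is a surjective Leibniz homomorphism whose kernel is exactly $Z_{\Lie}^{\mathfrak{m}_f}$: the bracket defining $\mathfrak{m}_f$ has $[\bar x_1,\bar x_2]$ as its second component, so $p_{\mathfrak{m}}$ respects brackets, and $p_{\mathfrak{m}}(z,\bar x)=0$ forces $\bar x = 0$. The analogous statement holds for $p_{\mathfrak{n}}$. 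I then consider the composite $p_{\mathfrak{n}}\circ \eta\colon \mathfrak{m}_f\to \mathfrak{n}/Z_{\Lie}(\mathfrak{n},\mathfrak{p})$, whose kernel is $\eta^{-1}(\Ker p_{\mathfrak{n}}) = \eta^{-1}(Z_{\Lie}^{\mathfrak{n}_g}) = Z_{\Lie}^{\mathfrak{m}_f}$ by the hypothesis $\eta(Z_{\Lie}^{\mathfrak{m}_f})=Z_{\Lie}^{\mathfrak{n}_g}$. By the first isomorphism theorem this factors through $\mathfrak{m}_f/Z_{\Lie}^{\mathfrak{m}_f}\cong \mathfrak{m}/Z_{\Lie}(\mathfrak{m},\mathfrak{q})$ to yield $\eta_1$, which is surjective because $\eta$ and $p_{\mathfrak{n}}$ are, and injective because we have quotiented by the full kernel; hence $\eta_1$ is an isomorphism. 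Concretely, $\eta_1(\bar m)$ is the second coordinate of $\eta(z,\bar m)$ for any choice of $z$, and this is well defined precisely because altering $z$ moves the argument within $Z_{\Lie}^{\mathfrak{m}_f}$, whose image under $\eta$ lies in $Z_{\Lie}^{\mathfrak{n}_g}$ and so does not change the second coordinate.

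The only step demanding any care — and really the sole content beyond bookkeeping — is confirming that $Z_{\Lie}^{\mathfrak{m}_f}$ is a two-sided ideal of $\mathfrak{m}_f$, so that the quotient is a genuine Leibniz algebra and the first isomorphism theorem applies. This is immediate either directly from the bracket formula of $\mathfrak{m}_f$ (every bracket involving a term $(z,0)$ has vanishing second component) or by transporting the two-sided ideal $Z_{\Lie}(\mathfrak{m},\mathfrak{q})\trianglelefteq \mathfrak{m}$ across the isomorphism $\mathfrak{m}_f\cong \mathfrak{m}$ furnished by Lemma \ref{lemma2}. Everything else is routine diagram-chasing, so I anticipate no genuine obstacle.
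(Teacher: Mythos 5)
Your proposal is correct and follows essentially the same route as the paper: the paper likewise obtains $\eta_2$ by restricting $\eta$ to $Z_{\Lie}^{\mathfrak{m}_f}$ (writing $\eta(z,0)=(\eta_2(z),0)$) and obtains $\eta_1$ by passing to the quotient $\mathfrak{m}_f/Z_{\Lie}^{\mathfrak{m}_f}$, reading $\eta_1(\bar{m})$ off as the second coordinate of $\eta(0,\bar{m})$. Your appeal to the first isomorphism theorem for the second-coordinate projection is simply a spelled-out version of the paper's commutative diagram of short exact sequences, with the added (and correct) verification that $Z_{\Lie}^{\mathfrak{m}_f}$ is a two-sided ideal, which the paper leaves implicit.
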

\begin{proof}
We consider the following commutative diagram:
$$\begin{CD}
0@>>> Z_{\Lie}^{\mathfrak{m}_f}@>>>\mathfrak{m}_f@>{\pi_{1}}>>
\dfrac{\mathfrak{m}_f}{Z_{\Lie}^{\mathfrak{m}_f}}@>>>0\\
&& @VV{\eta_{\mid}}V@VV{\eta}V @VV{\bar{\eta}}V\\
~~ 0@>>> Z_{\Lie}^{\mathfrak{n}_g}@>>>\mathfrak{n}_g@>{\pi_{2}}>>
\dfrac{ \mathfrak{n}_g}{Z_{\Lie}^{\mathfrak{n}_g}} @>>>0,
\end{CD}$$\\
where $\bar{\eta}\left((z, \bar{m})+Z_{\Lie}^{\mathfrak{m}_f}\right)=\eta (z, \bar{m})+Z_{\Lie}^{\mathfrak{n}_g}$.

Now we define $ \eta_1: \mathfrak{m}/ Z_{\Lie}(\mathfrak{m}, \mathfrak{q})\longrightarrow \mathfrak{n}/ Z_{\Lie}(\mathfrak{n}, \mathfrak{p})$ by $ \bar{\eta}((0, \bar{m})+Z_{\Lie}^{\mathfrak{m}_f})=\eta(0, \bar{m})+Z_{\Lie}^{\mathfrak{n}_g}=(0, \eta_1(\bar{m}))+Z_{\Lie}^{\mathfrak{n}_g} $, for all $ \bar{m}\in \mathfrak{m}/ Z_{\Lie}(\mathfrak{m}, \mathfrak{q}) $,
and $\eta_{2}:Z_{\Lie}(\mathfrak{m}, \mathfrak{q})\longrightarrow Z_{\Lie}(\mathfrak{n}, \mathfrak{p})$ given by  $\eta(z,0)=(\eta_2(z), 0)$, for any $z \in  Z_{\Lie}(\mathfrak{m}, \mathfrak{q})$.
It is easy to check that $\eta_{1} $ and $  \eta_{2} $ are isomorphisms.
\end{proof}

\begin{proposition}\label{proposition4}
Let $f$ and $g$ be two factor sets on the pair of Leibniz algebras $(\mathfrak{m}, \mathfrak{q})$ and $(\mathfrak{n}, \mathfrak{p})$, respectively. Let $\eta $, $ \eta_{1}$ and $\eta_{2} $ be as in Lemma \ref{lemma3}. Then there exists a linear map $ d: \mathfrak{n}/ Z_{\Lie}(\mathfrak{n}, \mathfrak{p})\longrightarrow Z_{\Lie}(\mathfrak{n}, \mathfrak{p})$ such that
\begin{equation*} \label{formula}
 \begin{aligned}[]
&\eta_2 {\big([z_1, z_2]+[\mu(\bar{{m}_1}), z_2]+[z_1, \mu(\bar{{m}_2})]+f(\bar{{m}_1}, \bar{{m}_2})\big)}+d(\eta_1([\bar{{m}_1}, \bar{{m}_2}]))=\\
&[\eta_2 (z_1)+d(\eta_1(\bar{{m}_1})), \eta_2 (z_2)+d(\eta_1(\bar{{m}_2}))]+[\eta_2 (z_1)+d(\eta_1(\bar{{m}_1})), \nu(\eta_1{(\bar{{m}_2})}) ]+ \\
&[\nu(\eta_1{(\bar{{m}_1})}), \eta_2 (z_2)+d(\eta_1(\bar{{m}_2}))]+g(\eta_{1}(\bar{m_1}), \eta_{1}(\bar{m_2})),
\end{aligned}
\end{equation*}
 for all  $(z_1, \bar{m_1}), (z_2, \bar{m_2})\in \mathfrak{m}_f$, where $\mu$ and $\nu$ are the corresponding splittings of  $\pi_1: \mathfrak{m} \twoheadrightarrow {\mathfrak{m}}/{Z_{\Lie}(\mathfrak{m}, \mathfrak{q})}$ and $\pi_2: \mathfrak{n} \twoheadrightarrow {\mathfrak{n}}/{Z_{\Lie}(\mathfrak{n}, \mathfrak{p})}$ associated to $f$ and $g$, respectively.
\end{proposition}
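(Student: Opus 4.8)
The plan is to extract the map $d$ directly from the behaviour of $\eta$ on the ``quotient'' summand, and then to read off the asserted identity as the equality of the $Z_{\Lie}(\mathfrak{n},\mathfrak{p})$-components of the homomorphism condition $\eta([x,y])=[\eta(x),\eta(y)]$.

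First I would pin down $d$. By the definition of $\eta_1$ in Lemma \ref{lemma3} we have $\eta(0,\bar m)+Z_{\Lie}^{\mathfrak{n}_g}=(0,\eta_1(\bar m))+Z_{\Lie}^{\mathfrak{n}_g}$ for every $\bar m\in\mathfrak{m}/Z_{\Lie}(\mathfrak{m},\mathfrak{q})$, so $\eta(0,\bar m)$ differs from $(0,\eta_1(\bar m))$ by an element of $Z_{\Lie}^{\mathfrak{n}_g}=\lbrace(w,0)\rbrace$. Hence $\eta(0,\bar m)=(d(\eta_1(\bar m)),\eta_1(\bar m))$ for a unique element $d(\eta_1(\bar m))\in Z_{\Lie}(\mathfrak{n},\mathfrak{p})$, and since $\eta_1$ is a bijection this prescribes $d:\mathfrak{n}/Z_{\Lie}(\mathfrak{n},\mathfrak{p})\to Z_{\Lie}(\mathfrak{n},\mathfrak{p})$ on the whole of its domain; linearity of $d$ follows from linearity of $\eta$ together with surjectivity of $\eta_1$. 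Combining this with $\eta(z,0)=(\eta_2(z),0)$ from Lemma \ref{lemma3} and additivity of $\eta$ yields the master formula $\eta(z,\bar m)=(\eta_2(z)+d(\eta_1(\bar m)),\eta_1(\bar m))$.

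Next I would feed this master formula into both sides of $\eta([(z_1,\bar m_1),(z_2,\bar m_2)])=[\eta(z_1,\bar m_1),\eta(z_2,\bar m_2)]$. On the left, the bracket computed in $\mathfrak{m}_f$ has first component $[z_1,z_2]+[\mu(\bar m_1),z_2]+[z_1,\mu(\bar m_2)]+f(\bar m_1,\bar m_2)$ and second component $[\bar m_1,\bar m_2]$, where I use that the structural maps of $\mathfrak{m}_f$ are $R_{\bar m_1}(z_2)=[\mu(\bar m_1),z_2]$ and $L_{\bar m_2}(z_1)=[z_1,\mu(\bar m_2)]$; applying the master formula to this pair, its $Z_{\Lie}(\mathfrak{n},\mathfrak{p})$-component is exactly the left-hand side of the stated equation. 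On the right, substituting $\eta(z_i,\bar m_i)=(\eta_2(z_i)+d(\eta_1(\bar m_i)),\eta_1(\bar m_i))$ into the bracket of $\mathfrak{n}_g$, with $R$ and $L$ now realised through the splitting $\nu$, the $Z_{\Lie}(\mathfrak{n},\mathfrak{p})$-component is precisely the right-hand side. Equality of these two first components is forced because $\eta$ is an isomorphism, which is exactly the claim; the second components merely reproduce the already known relation $\eta_1([\bar m_1,\bar m_2])=[\eta_1(\bar m_1),\eta_1(\bar m_2)]$.

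I do not anticipate a genuine obstacle beyond bookkeeping. The only delicate points are the well-definedness and linearity of $d$, which rest entirely on $\eta_1$ being a bijection, and the correct identification of the structural maps $R,L$ of $\mathfrak{m}_f$ and $\mathfrak{n}_g$ with the respective brackets via the splittings $\mu$ and $\nu$. Once the master formula $\eta(z,\bar m)=(\eta_2(z)+d(\eta_1(\bar m)),\eta_1(\bar m))$ is established, the asserted identity is nothing more than the first-coordinate projection of $\eta([x,y])=[\eta(x),\eta(y)]$.
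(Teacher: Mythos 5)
Your proposal is correct and follows essentially the same route as the paper's proof: you define $d$ from the $Z_{\Lie}^{\mathfrak{n}_g}$-discrepancy between $\eta(0,\bar m)$ and $(0,\eta_1(\bar m))$, derive the master formula $\eta(z,\bar m)=(\eta_2(z)+d(\eta_1(\bar m)),\eta_1(\bar m))$, and then equate the first components of $\eta([x,y])=[\eta(x),\eta(y)]$ computed both ways, exactly as the paper does. Your extra remarks on the well-definedness and linearity of $d$ (via bijectivity of $\eta_1$) merely make explicit what the paper asserts in one line.
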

\begin{proof}
By Lemma \ref{lemma3}, for all $\bar{m}\in \mathfrak{m}/ Z_{\Lie}(\mathfrak{m}, \mathfrak{q})$ and $z\in  Z_{\Lie}(\mathfrak{m}, \mathfrak{q})$, there exists $z_{\eta_1(\bar{m})}\in  Z_{\Lie}^{\mathfrak{n}_g} $ such that $ \eta (0, \bar{m})=(0, \eta_{1}(\bar{m}))+(z_{\eta_1(\bar{m})},0) $. We define the map $ d:\frac{\mathfrak{n}}{Z_{\Lie}(\mathfrak{n}, \mathfrak{p})}\longrightarrow Z_{\Lie}(\mathfrak{n}, \mathfrak{p})$  by $d(\eta_1(\bar{m}))=z_{\eta_1(\bar{m})}$, which is a linear map. So we have $\eta(z, \bar{m})=\eta(z,0)+\eta(0, \bar{m}) =(\eta_{2}(z), 0) +(0, \eta_{1}(\bar{m})) +(d(\eta_1({\bar{m}})),0)$. Applying this equality, for all $ (z_1, \bar{m_1}), (z_2, \bar{m_2})\in \mathfrak{m}_f $, one gets,
\begin{align*}
\eta([(z_1, \bar{{m}_1}), (z_2, \bar{{m}_2})])=&\eta  ([z_1, z_2]+[\mu(\bar{{m}_1}), z_2]+[z_1, \mu(\bar{{m}_2})]+f(\bar{{m}_1}, \bar{{m}_2}), [\bar{{m}_1}, \bar{{m}_2}])\\
=& \left( \eta_2 {\left([z_1, z_2]+[\mu(\bar{{m}_1}), z_2]+[z_1, \mu(\bar{{m}_2})]+f(\bar{{m}_1}, \bar{{m}_2})\right)} \right.\\
+& \left. d(\eta_1([\bar{{m}_1}, \bar{{m}_2}]), {{\eta}_1}([\bar{{m}_1}, \bar{{m}_2}])\right).
\end{align*}
On the other hand,
\begin{align*}
\eta([(z_1, \bar{{m}_1}), (z_2, \bar{{m}_2})])=& [\eta(z_1, \bar{{m}_1}), \eta(z_2, \bar{{m}_2})]\\
= &[(\eta_2 (z_1)+d(\eta_1(\bar{{m}_1}), \eta_1{(\bar{{m}_1})}), (\eta_2 (z_2)+d(\eta_1(\bar{{m}_2}),  \eta_1{(\bar{{m}_2})})]\\
= & \left( [\eta_2 (z_1)+d(\eta_1(\bar{{m}_1})), \eta_2 (z_2)+d(\eta_1(\bar{{m}_2}))] \right.\\
+ & [\eta_2 (z_1)+d(\eta_1(\bar{{m}_1})), \nu(\eta_1{(\bar{{m}_2})})] \\
+ & [\nu(\eta_1{(\bar{{m}_1})}), \eta_2 (z_2)+d(\eta_1(\bar{{m}_2}))]\\
+ & g(\eta_{1}(\bar{m_1}), \eta_{1}(\bar{m_2})), [\eta_{1}(\bar{m_1}), \eta_{1}(\bar{m_2})]).
\end{align*}
Now the statement follows from the equality of the first component of both computations.
\end{proof}
Under assumptions of above Proposition and Proposition \ref{proposition3} we have the following isomorphism between pairs of Leibniz algebras:
\[(Z_{\Lie}(\mathfrak{m}, \mathfrak{q}), \mathfrak{m}_f) \cong (Z_{\Lie}(\mathfrak{n}, \mathfrak{p}), \mathfrak{n}_g)
\cong (Z_{\Lie}(\mathfrak{n}, \mathfrak{p}), \mathfrak{n}) \]

\section{{\Lie}-isoclinism and isomorphism between pairs of Leibniz algebras}
\label{sec:3}
If two pairs of Leibniz algebras are isomorphic, it is easy to check that they are {\Lie}-isoclinic. But in this chapter, we show that
the converse is not necessarily valid for finite dimensional ({\Lie}-stem) Leibniz algebras, whereas isoclinic and isomorphism are equal for finite dimensional (stem) Lie algebras \cite{RefM}, and Pair of (stem) Lie algebras \cite{RefMP}. Nevertheless, we provide some conditions that these concepts are equal for finite dimensional {\Lie}-stem Leibniz algebras.

\begin{example}
$\mathfrak{q}=span \lbrace a_{1}, a_{2}, a_{3}\rbrace $, with non-zero multiplication $ [a_{1}, a_{3}]= a_{1} $ (it belongs to the class 2 (d)), where $ Z_{\Lie}(\mathfrak{q})=span \lbrace a_{2 }\rbrace $ and $ [\mathfrak{q}, \mathfrak{q}]_{\Lie}=span \lbrace a_{1}\rbrace$,

 and

$\mathfrak{p}= span \lbrace g_{1}, g_{2}, g_{3}\rbrace$, with non-zero multiplications $[g_{1}, g_{3}]=g_{1}, [g_{2}, g_{3}]=g_{2}$ and $[g_{3}, g_{2}]= -g_{2}$  (it belongs to the class 2 (e) with $\alpha=1$), where
$Z_{\Lie}(\mathfrak{p})=span \lbrace g_{2 }\rbrace$ and $[\mathfrak{p}, \mathfrak{p}]_{\Lie}=span \lbrace g_{1}\rbrace$.

Now we define isomorphisms $\omega: \frac{\mathfrak{q}}{Z_{\Lie}(\mathfrak{q})}\longrightarrow \frac{\mathfrak{p}}{Z_{\Lie}(\mathfrak{p})}$ given by $\omega ({a_{i}})={g_{i}},~ i=1, 3$, and $ \tau: [\mathfrak{q}, \mathfrak{q}]_{\Lie}\longrightarrow [\mathfrak{p}, \mathfrak{p}]_{\Lie}$ given by $\tau (a_{1})=g_{1}$.

One easily verifies that $ \mathfrak{p} $ and $ \mathfrak{q}$ are {\Lie}-isoclinic.
\end{example}

In certain circumstances, even with additional conditions like the Leibniz algebras are {\Lie}-stem, this result is not true, as well. In the following, we investigate two {\Lie}-stem (pairs of) Leibniz algebras, with the same finite dimension, that they are not isomorphic since they belong to different classes in \cite[Theorem 4.2.6]{RefD} and \cite[Proposition 3.11]{RefCaKu}, but we check they are {\Lie}-isoclinic.

\begin{example}\ \label{example3}
\begin{enumerate}
\item[{\it a)}] Consider the following five-dimensional non isomorphic {\Lie}-stem Leibniz algebras given in \cite[Theorem 4.2.6]{RefD}:

 $\mathcal{A}_1=span \lbrace a_{1}, a_{2}, a_{3}, a_4, a_5\rbrace $, with non-zero multiplications $[a_{1}, a_{1}]= a_{3}$, $[a_{2}, a_{1}]$ $=a_{4}$ and $[a_{1}, a_{3}]= a_{5}$, in which
  $Z_{\Lie}(\mathcal{A}_1)=span \lbrace a_{4}, a_5\rbrace $ is included in $[\mathcal{A}_1, \mathcal{A}_1]_{\Lie}$ $=span \lbrace a_{3}, a_{4}, a_5 \rbrace $,

  and

$\mathcal{A}_7 = span \lbrace g_{1}, g_{2}, g_{3}, g_4, g_5 \rbrace$, with non-zero multiplications $[g_{1}, g_1]=g_{3}$, $[g_{1}, g_2]$ $=g_{4}$, $[g_2, g_1]= g_5$ and $[g_1, g_3]= g_5$, in which
$Z_{\Lie}(\mathcal{A}_7)=span \lbrace g_{4 }, g_5\rbrace$ is included in $ [\mathcal{A}_7, \mathcal{A}_7]_{\Lie}$ $=span \lbrace g_{3}, g_4, g_5\rbrace $.

We define $\omega: \frac{\mathcal{A}_1}{Z_{\Lie}(\mathcal{A}_1)}\longrightarrow \frac{\mathcal{A}_7}{Z_{\Lie}(\mathcal{A}_7)}$ by $ \omega (\bar{a_{1}})=\bar{g_{1}},~ \omega (\bar{a_{2}})=\bar{g_{2}},~ \omega (\bar{a_{3}})=\bar{g_{3}}, ~ \omega (\bar{a_{4}})=\bar{0}=\bar{g_{4}}, ~ \omega (\bar{a_{5}})=\bar{0}=\bar{g_{5}}$ and $\tau: [\mathcal{A}_1, \mathcal{A}_1]_{\Lie}\longrightarrow [\mathcal{A}_7, \mathcal{A}_7]_{\Lie}$ by $\tau (a_{3})=g_{3}, \tau (a_{4})=g_{4}+g_5, \tau (a_{5})=g_{5}$. Now it is easy to check that $\omega$ and $\tau$ are  isomorphisms making diagram (\ref{isoclinism}) commutative, hence $\mathcal{A}_1\sim \mathcal{A}_7$.

Note that the definition of $\tau$ reproduces the isomorphism given by Corollary \ref{corollary2}, namely  $ \tau_{\mid}:  Z_{\Lie}(\mathcal{A}_1)\cong  Z_{\Lie}(\mathcal{A}_7)$.

\item[{\it b)}]  Consider the following  four-dimensional non isomorphic {\Lie}-stem Leibniz algebras given in \cite{RefCaKu}:

 $\mathfrak{q}=span \lbrace e_{1}, e_{2}, e_{3}, e_4 \rbrace$, with non-zero multiplications $[e_{1}, e_4]= e_{1}$, $[e_{2}, e_4]$
 $=e_{2}$ and $[e_4, e_4]= e_{3}$ (class  ${\mathcal L}_{26}(\mu_2)$, with $\mu_2=1$, in \cite[Proposition 3.11]{RefCaKu}). Take the two-sided ideal $\mathfrak{m} = span \{e_1,e_2,e_3\}$ of $\mathfrak{q}$. Then $(\mathfrak{m},\mathfrak{q})$ is a {\Lie}-stem pair since
 $Z_{\Lie}(\mathfrak{m},\mathfrak{q})=span \lbrace e_{3 }\rbrace$ and $[\mathfrak{m},\mathfrak{q}]_{\Lie}=span \lbrace e_{1}, e_{2}, e_3 \rbrace$,

  and

 $\mathfrak{p}=span \lbrace a_{1}, a_{2}, a_{3}, a_4 \rbrace$, with non-zero multiplications $[a_{1}, a_4]= a_{2}$, $[a_{3}, a_4]$
 $=a_{3}$ and $[a_4, a_4]= a_{1}$ (class  ${\mathcal L}_{40}$ in \cite[Proposition 3.11]{RefCaKu}). Take the two-sided ideal $\mathfrak{n} = span \{a_1,a_2,a_3\}$ of $\mathfrak{p}$. Then $(\mathfrak{n},\mathfrak{p})$ is a {\Lie}-stem pair since
 $Z_{\Lie}(\mathfrak{n},\mathfrak{p})=span \lbrace a_{2}\rbrace$ and $[\mathfrak{n},\mathfrak{p}]_{\Lie}=span \lbrace a_{1}, a_{2}, a_3 \rbrace$.

Now we define the isomorphisms $\omega: \frac{\mathfrak{q}}{Z_{\Lie}(\mathfrak{m},\mathfrak{q})}\longrightarrow  \frac{\mathfrak{p}}{Z_{\Lie}(\mathfrak{n},\mathfrak{p})}$ by $\omega (\bar{e_{1}})=\bar{a_{1}},~ \omega (\bar{e_{2}})=\bar{a_{3}},~ \omega (\bar{e_4})= \bar{a_4} $ and $ \tau: [\mathfrak{m},\mathfrak{q}]_{\Lie}\longrightarrow [\mathfrak{n},\mathfrak{p}]_{\Lie}$ by $ \tau (e_{1})=a_{2}, \tau (e_{2})=a_{3}, \tau (e_{3})=a_{1}$.

Now it is easy to check that $\omega$ and $\tau$ are  isomorphism making diagram (\ref{isoclinism}) commutative,   hence $(\mathfrak{m},\mathfrak{q})$  and $(\mathfrak{n},\mathfrak{p})$ are {\Lie}-isoclinic.

\end{enumerate}
\end{example}

The aim of the rest of the paper is to find  conditions under what {\Lie}-isoclinism between two {\Lie}-stem Leibniz algebras implies their isomorphism.

\begin{lemma}\label{lemma4}
For any {\Lie}-stem Leibniz algebra $\mathfrak{m}$, $Z(\mathfrak{m}) = Z_{\Lie}(\mathfrak{m})$.
\end{lemma}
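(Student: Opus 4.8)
The plan is to prove the two inclusions separately, the interesting content lying entirely in one of them. Write $Z(\mathfrak{m})$ for the (two-sided) center $\{z \in \mathfrak{m} \mid [z,m]=[m,z]=0 \text{ for all } m \in \mathfrak{m}\}$ and recall $Z_{\Lie}(\mathfrak{m}) = \{z \mid [z,m]+[m,z]=0 \text{ for all } m\}$. The inclusion $Z(\mathfrak{m}) \subseteq Z_{\Lie}(\mathfrak{m})$ is immediate, since $[z,m]=[m,z]=0$ forces $[z,m]+[m,z]=0$. So the whole point is the reverse inclusion $Z_{\Lie}(\mathfrak{m}) \subseteq Z(\mathfrak{m})$, and this is exactly where I would bring in the {\Lie}-stem hypothesis, which in the single-algebra case reads $Z_{\Lie}(\mathfrak{m}) \subseteq [\mathfrak{m},\mathfrak{m}]_{\Lie}$.

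Before combining anything, I would isolate the key structural fact about the Leibniz kernel $\mathrm{Leib}(\mathfrak{m}) := \langle\{[x,x] : x \in \mathfrak{m}\}\rangle$, i.e. the kernel of the Liezation functor. Specializing the Leibniz identity $[x,[y,z]]=[[x,y],z]-[[x,z],y]$ at $y=z=x$ gives
\[
[w,[x,x]]=[[w,x],x]-[[w,x],x]=0 \qquad (w,x\in\mathfrak{m}),
\]
and by bilinearity $[w,i]=0$ for every $w\in\mathfrak{m}$ and every $i\in\mathrm{Leib}(\mathfrak{m})$; that is, the Leibniz kernel is annihilated on the right by all of $\mathfrak{m}$. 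I would then note that every symmetric bracket lies in this kernel, because $[m,m']+[m',m]=[m+m',m+m']-[m,m]-[m',m']$ is a combination of squares. Since $\mathrm{Leib}(\mathfrak{m})$ is a two-sided ideal containing all these generators, the ideal they generate satisfies $[\mathfrak{m},\mathfrak{m}]_{\Lie}\subseteq\mathrm{Leib}(\mathfrak{m})$.

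With these two observations in hand the conclusion is short. Take $z\in Z_{\Lie}(\mathfrak{m})$. The {\Lie}-stem hypothesis places $z$ in $[\mathfrak{m},\mathfrak{m}]_{\Lie}\subseteq\mathrm{Leib}(\mathfrak{m})$, so the right-annihilation fact gives $[m,z]=0$ for every $m\in\mathfrak{m}$, making $z$ right-central. Feeding this back into the defining relation $[z,m]+[m,z]=0$ of $Z_{\Lie}(\mathfrak{m})$ forces $[z,m]=0$ as well, so $z$ is also left-central and hence $z\in Z(\mathfrak{m})$, completing the reverse inclusion and the proof. The step I expect to carry the weight of the argument is the structural claim of the second paragraph, namely that $[\mathfrak{m},\mathfrak{m}]_{\Lie}$ sits inside the Leibniz kernel and that the kernel is right-annihilated by the whole algebra; once that is established, both centrality conditions for $z$ drop out purely formally from the $Z_{\Lie}$ relation.
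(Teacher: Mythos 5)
Your proof follows the same route as the paper's, and in fact supplies exactly the content the paper leaves implicit: the paper's entire argument for the nontrivial inclusion is the bare assertion that $[y,z]=0$ for every $y\in\mathfrak{m}$ and $z\in Z_{\Lie}(\mathfrak{m})$, which is precisely what your stem-hypothesis-plus-Leibniz-kernel argument establishes; your final steps (deducing $[z,y]=-[y,z]=0$ from the defining relation of $Z_{\Lie}(\mathfrak{m})$, and the trivial inclusion $Z(\mathfrak{m})\subseteq Z_{\Lie}(\mathfrak{m})$) coincide with the paper's.

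There is, however, one step whose justification is incomplete as written. You define $\mathrm{Leib}(\mathfrak{m})$ as the two-sided \emph{ideal} generated by the squares, but ``by bilinearity'' only yields $[w,i]=0$ for $i$ in the \emph{linear span} of the squares; a priori the ideal contains further elements, such as $[[x,x],m]$, about which bilinearity says nothing. The gap is real but closes in one line using the same Leibniz identity you already invoke: the right annihilator $A=\{i\in\mathfrak{m}\mid [w,i]=0 \text{ for all } w\in\mathfrak{m}\}$ is itself a two-sided ideal, since for $i\in A$ and $m\in\mathfrak{m}$ one has $[m,i]=0\in A$, and $[i,m]\in A$ because $[w,[i,m]]=[[w,i],m]-[[w,m],i]=0$ for every $w\in\mathfrak{m}$. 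Since $A$ contains every square, it contains the ideal $\mathrm{Leib}(\mathfrak{m})$ they generate, hence by your polarization identity also $[\mathfrak{m},\mathfrak{m}]_{\Lie}$, and the rest of your argument goes through verbatim. (Equivalently, one can check that the span of the squares is already a two-sided ideal, using $[[x,x],m]=[x,[x,m]]+[[x,m],x]$ together with polarization applied to the pair $x$, $[x,m]$.) With this supplement your proof is complete, and it is a faithful, properly detailed version of the paper's much terser one.
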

\begin{proof}
Since for every $y\in \mathfrak{m}$ and $ z\in  Z_{\Lie}(\mathfrak{m}) $, $[y, z]=0$, it follows that $[z, y]=[y, z]=0$ and therefore $Z_{\Lie}(\mathfrak{m}) \subseteq Z(\mathfrak{m})$.
A direct checking  shows that $Z(\mathfrak{m}) \subseteq Z_{\Lie}(\mathfrak{m})$.
\end{proof}

\begin{theorem}\label{theorem3}
Let $( \mathfrak{m}, \mathfrak{q})$ and  $( \mathfrak{n}, \mathfrak{p})$ be two {\Lie}-isoclinic pairs of finite dimensional complex Leibniz algebras such that:
\begin{enumerate}
\item[a)]  $\mathfrak{m}$ and $\mathfrak{n}$ are {\Lie}-stem Leibniz algebras.
\item[b)] For all  elements $ {m_1}, {m_2}\in {\mathfrak{m}}$ there exists $ \varepsilon_{12}\in \mathbb{C}$ such that $[{m_1}, {m_2}]={\varepsilon_{12}} [{m_2}, {m_1}]$.
 \end{enumerate}

Then $\mathfrak{m}$ and $\mathfrak{n}$ are isomorphic.
\end{theorem}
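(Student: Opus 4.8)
The plan is to recover $\mathfrak{m}\cong\mathfrak{n}$ from a comparison of two factor sets on one and the same central extension, and to let hypothesis b) force those factor sets to differ by a coboundary. First, Corollary~\ref{corollary1} shows that the absolute pairs $(\mathfrak{m},\mathfrak{m})$ and $(\mathfrak{n},\mathfrak{n})$ are {\Lie}-isoclinic, so I get isomorphisms $\alpha:\mathfrak{m}/Z_{\Lie}(\mathfrak{m})\to\mathfrak{n}/Z_{\Lie}(\mathfrak{n})$ and $\beta:[\mathfrak{m},\mathfrak{m}]_{\Lie}\to[\mathfrak{n},\mathfrak{n}]_{\Lie}$ compatible with the commutator maps $C$. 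By a) and Lemma~\ref{lemma4}, $Z_{\Lie}(\mathfrak{m})=Z(\mathfrak{m})$ and $Z_{\Lie}(\mathfrak{n})=Z(\mathfrak{n})$ are the genuine centres; hence the relevant extensions are central and in every $\mathfrak{m}_f$ below the action is trivial, so the bracket collapses to $[(z_1,\bar m_1),(z_2,\bar m_2)]=(f(\bar m_1,\bar m_2),[\bar m_1,\bar m_2])$. Since both absolute pairs are {\Lie}-stem, Corollary~\ref{corollary2} gives $\beta_{|}:Z_{\Lie}(\mathfrak{m})\cong Z_{\Lie}(\mathfrak{n})$, Proposition~\ref{proposition3} produces a factor set $f=\beta^{-1}\circ g\circ(\alpha\times\alpha)$ with $\mathfrak{n}\cong\mathfrak{m}_f$ (here $g$ is a factor set of $\mathfrak{n}$), and Lemma~\ref{lemma2} gives $\mathfrak{m}\cong\mathfrak{m}_{f_0}$ for the factor set $f_0$ attached to a splitting $\rho$ of $\mathfrak{m}\twoheadrightarrow\mathfrak{m}/Z_{\Lie}(\mathfrak{m})$.

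It thus suffices to prove $\mathfrak{m}_{f_0}\cong\mathfrak{m}_f$. Writing the candidate map $\eta(z,\bar m)=(\beta_{|}(z)+d(\bar m),\alpha(\bar m))$ as in Proposition~\ref{proposition4} and using triviality of the action, the homomorphism condition collapses to the single requirement
\begin{equation*}
(f-f_0)(\bar m_1,\bar m_2)=d'\big([\bar m_1,\bar m_2]\big),\qquad d':=\beta_{|}^{-1}\circ d\circ\alpha,
\end{equation*}
for some linear $d':\mathfrak{m}/Z_{\Lie}(\mathfrak{m})\to Z_{\Lie}(\mathfrak{m})$. Next I compute the symmetric part: expanding the commutator maps through the splittings and invoking the isoclinism identity $\beta(C_{\mathfrak{m}})=C_{\mathfrak{n}}\circ(\alpha\times\alpha)$ yields
\begin{equation*}
(f-f_0)(\bar m_1,\bar m_2)+(f-f_0)(\bar m_2,\bar m_1)=d_1\big([\bar m_1,\bar m_2]+[\bar m_2,\bar m_1]\big),
\end{equation*}
where $d_1:=\rho-\beta^{-1}\sigma\alpha$ is linear on the span of symmetric brackets ($\sigma$ a splitting for $\mathfrak{n}$). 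So the symmetrisation of $f-f_0$ already factors through the bracket.

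The function of hypothesis b) is to promote this symmetric statement to the full identity. Passing to the quotient, b) descends to $\mathfrak{m}/Z_{\Lie}(\mathfrak{m})$ and attaches to each pair a scalar $\varepsilon$ with both $[\bar m_1,\bar m_2]=\varepsilon[\bar m_2,\bar m_1]$ and $f_0(\bar m_1,\bar m_2)=\varepsilon f_0(\bar m_2,\bar m_1)$; bilinearity then rigidifies $\varepsilon$ to a global scalar which, over $\mathbb{C}$, satisfies $\varepsilon^2=1$, so that $\mathfrak{m}$ is either \emph{symmetric} ($[x,y]=[y,x]$) or a Lie algebra. Once one knows that the transported factor set $f$ inherits the same $\varepsilon$-symmetry, the case $\varepsilon\neq-1$ is immediate: from $f_0(\bar m_1,\bar m_2)=\tfrac{\varepsilon}{1+\varepsilon}\,(f_0(\bar m_1,\bar m_2)+f_0(\bar m_2,\bar m_1))$, the analogue for $f$, and the symmetric identity above, one gets $(f-f_0)(\bar m_1,\bar m_2)=d_1([\bar m_1,\bar m_2])$, so $d'=d_1$ works and $\eta$ is the desired isomorphism. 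In the symmetric regime the full bracket equals its symmetric part, which the isoclinism preserves outright; in the Lie regime the factor sets are antisymmetric, $\mathfrak{n}\cong\mathfrak{m}_f$ is again a Lie algebra, and one closes with the classical coincidence of isoclinism and isomorphism for finite-dimensional stem Lie algebras \cite{RefM}.

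The main obstacle is exactly this inheritance on the antisymmetric ($\varepsilon=-1$) pairs: there the symmetrised identity carries no information, because both $f^{s}$ and $f_0^{s}$ vanish, so one cannot read off the values of $f-f_0$ from it. The point to be argued carefully is that the transported $f$ possesses no antisymmetric part beyond that of $f_0$ — equivalently, that hypothesis b) passes to $\mathfrak{n}$. I expect this to follow by combining two facts already available: that $\alpha$ is a genuine {\Lie}-algebra (hence bracket-preserving) isomorphism of the quotients, which pins down the Lie part of the bracket on $\mathfrak{m}/Z_{\Lie}(\mathfrak{m})$, and that {\Lie}-stemness confines the centre $Z_{\Lie}(\mathfrak{n})$ inside $[\mathfrak{n},\mathfrak{n}]_{\Lie}$, i.e. inside the span of symmetric brackets, so that a purely antisymmetric central contribution to $g$ is incompatible with $(\mathfrak{n},\mathfrak{n})$ being stem. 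Granting this, $f-f_0$ genuinely factors through $[\bar m_1,\bar m_2]$, the correction $d'$ exists, and $\eta$ furnishes the isomorphism $\mathfrak{m}\cong\mathfrak{n}$.
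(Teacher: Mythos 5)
Your overall architecture is the same as the paper's: realize $\mathfrak{n}$ as $\mathfrak{m}_f$ via Proposition~\ref{proposition3}, realize $\mathfrak{m}$ through a factor set of its own central extension (central by Lemma~\ref{lemma4}), reduce everything to showing that the two factor sets differ by a linear map factoring through the bracket, extract the symmetrized identity from the isoclinism, and then invoke hypothesis \emph{b)} to de-symmetrize — this is precisely the paper's route to its equation~(\ref{best}), followed by the map $\lambda$ and the Short Five Lemma. The genuine gap is in your de-symmetrization step, and it is twofold. First, the claim that ``bilinearity then rigidifies $\varepsilon$ to a global scalar'' is false: hypothesis \emph{b)} supplies one scalar \emph{per pair}, and these scalars really do vary. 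The paper itself insists on this in the example right after Theorem~\ref{theorem3}: the algebra ${\mathcal L}_{16}$ of \cite{RefCaKu}, with $[a_1,a_1]=a_3$, $[a_2,a_4]=-[a_4,a_2]=a_2$, $[a_4,a_4]=-2a_2$, satisfies \emph{b)} with $\varepsilon_{11}=1$, $\varepsilon_{24}=-1$, $\varepsilon_{44}=1$, so no global $\varepsilon$ exists. Consequently your trichotomy ``symmetric or Lie'' does not exhaust the cases, and the omitted mixed case is exactly the one the theorem is about. Worse, under hypothesis \emph{a)} the ``Lie regime'' is vacuous: for a Lie algebra $Z_{\Lie}(\mathfrak{m})=\mathfrak{m}$ while $[\mathfrak{m},\mathfrak{m}]_{\Lie}=0$, so a {\Lie}-stem Lie algebra is zero; the appeal to Moneyhun's theorem \cite{RefM} therefore handles nothing, which is a sign the case split is not tracking what \emph{b)} actually says.

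Second, the obstacle you honestly flag — that on pairs with $\varepsilon=-1$ the symmetrized identity reads $0=0$ and determines nothing about $f-f_0$, so one must know that \emph{b)} (and the resulting $\varepsilon$-symmetry of the factor set) transports along the isoclinism — is left as an expectation (``I expect this to follow\dots'', ``Granting this\dots''), not an argument; and even granted, it is not explained how inheritance alone recovers the unsymmetrized identity on such pairs. For comparison, the paper never globalizes $\varepsilon$: it keeps the pair-dependent scalars, notes (as you do) that $f$ and the bracket share the same $\varepsilon$ on each pair, and passes from the symmetrized identity to equation~(\ref{best}) pairwise — a step that is itself terse, but one the paper takes with pair-dependent data rather than with a uniform $\varepsilon$. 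As written, your proposal substitutes for that delicate step a device (the global scalar) that the paper's own example refutes, and leaves the genuinely hard residual case open; so the proof does not close.
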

\begin{proof}
First of all, we claim that $( \mathfrak{m}, \mathfrak{q})$ and  $( \mathfrak{n}, \mathfrak{p})$ are {\Lie}-stem pairs. Indeed, $Z_{\Lie}(\mathfrak{m}, \mathfrak{q}) \subseteq Z_{\Lie}(\mathfrak{m}) \subseteq [\mathfrak{m},\mathfrak{m}]_{\Lie}\subseteq [\mathfrak{m},\mathfrak{q}]_{\Lie}$.

Owing to Proposition \ref{proposition3}, there exist two factor sets $f: \frac{\mathfrak{m}}{Z_{\Lie}(\mathfrak{m}, \mathfrak{q})} \times \frac{\mathfrak{m}}{Z_{\Lie}(\mathfrak{m}, \mathfrak{q})} \to Z_{\Lie}(\mathfrak{m}, \mathfrak{q})$ and $g: \frac{\mathfrak{n}}{Z_{\Lie}(\mathfrak{n}, \mathfrak{p})} \times \frac{\mathfrak{n}}{Z_{\Lie}(\mathfrak{n}, \mathfrak{p})} \to Z_{\Lie}(\mathfrak{n}, \mathfrak{p})$ such that $\mathfrak{n} \cong \mathfrak{m}_f$ and  $\mathfrak{m} \cong \mathfrak{n}_g$.  Let $(\omega, \tau)$ be the {\Lie}-isoclinism between $\mathfrak{m}_f$ and $\mathfrak{n}_g$ provided by Corollary \ref{corollary1}, then the following diagram is commutative,

\begin{equation*}
\begin{CD}
\frac{\mathfrak{m}_f}{Z_{\Lie}(\mathfrak{m}_f)}\times \frac{\mathfrak{m}_f}{Z_{\Lie}(\mathfrak{m}_f)}@>{C_1}>>[\mathfrak{m}_f, \mathfrak{m}_f]_{\Lie}\\
@VV{{\omega}\times \omega}V @VV{\tau}V\\
\frac{\mathfrak{n}_g}{Z_{\Lie}(\mathfrak{n}_g)}\times \frac{\mathfrak{n}_g}{Z_{\Lie}(\mathfrak{n}_g)}@>{C_2}>>[\mathfrak{n}_g, \mathfrak{n}_g]_{\Lie},
\end{CD}
\end{equation*}

We know that:
$$\omega ((z, \bar{m})+Z_{\Lie}(\mathfrak{m}_f))= \omega ((0, \bar{m}) +Z_{\Lie}(\mathfrak{m}_f))=(0, \omega_1(\bar{m}))+Z_{\Lie}(\mathfrak{n}_g)$$
where $\omega_1: \mathfrak{m}/ Z_{\Lie}(\mathfrak{m}, \mathfrak{q})\longrightarrow \mathfrak{n}/ Z_{\Lie}(\mathfrak{n}, \mathfrak{p})$ is an isomorphism.

On the other hand, from the following diagram
$$\begin{CD}
Z_{\Lie}^{\mathfrak{n}_g}\cong Z_{\Lie}(\mathfrak{n}, \mathfrak{p})\subseteq Z_{\Lie}(\mathfrak{n})\subseteq [\mathfrak{n}, \mathfrak{n}]_{\Lie}\cong [\mathfrak{m}_f, \mathfrak{m}_f]_{\Lie}\\
\quad \quad \quad  \quad \quad \quad \quad \quad \quad \quad \quad \quad \quad \quad \quad \quad \quad  @VV{\tau}V\\
\quad Z_{\Lie}^{\mathfrak{m}_f}\cong Z_{\Lie}(\mathfrak{m}, \mathfrak{q})\subseteq Z_{\Lie}(\mathfrak{m})\subseteq [\mathfrak{m}, \mathfrak{m}]_{\Lie}\cong [\mathfrak{n}_g, \mathfrak{n}_g]_{\Lie}\\
\end{CD}$$\\
and keeping in mind that $Z_{\Lie}(\mathfrak{m}, \mathfrak{q})$ is a two-sided ideal of $[\mathfrak{m}, \mathfrak{m}]_{\Lie}$, then from the proof of Theorem \ref{theorem2}, Corollary \ref{corollary2} and the following commutative diagram
\[ \xymatrix{
0 \ar[r] & Z_{\Lie}(\mathfrak{n}, \mathfrak{p})\cong Z_{\Lie}^{\mathfrak{n}_g} \ar[r] \ar@{-->}[d]^{\tau_{\mid}} & [\mathfrak{n}, \mathfrak{n}]_{\Lie} \ar[r] \ar[d]_{\wr}^{\tau}&
\frac{[\mathfrak{n}, \mathfrak{n}]_{\Lie}}{Z_{\Lie}(\mathfrak{n}, \mathfrak{p})} \ar[r] \ar[d]_{\wr}^{\bar{\tau}} & 0\\
0 \ar[r] & Z_{\Lie}(\mathfrak{m}, \mathfrak{q})\cong Z_{\Lie}^{\mathfrak{m}_f} \ar[r]  & [\mathfrak{m}, \mathfrak{m}]_{\Lie} \ar[r] &
\frac{ [\mathfrak{m}, \mathfrak{m}]_{\Lie}}{Z_{\Lie}(\mathfrak{m}, \mathfrak{q})}\ar[r]  & 0,
} \]
we conclude that $\tau_{\mid}:Z_{\Lie}^{\mathfrak{n}_g} \cong Z_{\Lie}^{\mathfrak{m}_f}$.

Now we define $\tau_{2}:Z_{\Lie}(\mathfrak{m}, \mathfrak{q})\longrightarrow Z_{\Lie}(\mathfrak{n}, \mathfrak{p})$  by $\tau (z,0)=(\tau_{2} (z), 0)$, for all $z\in   Z_{\Lie}(\mathfrak{m}, \mathfrak{q})$.

 Using Lemma \ref{lemma4}, for all $ (z_i, \bar{m_i})\in \mathfrak{m}_f $, $ i=1, 2 $,
\[
\begin{array}{l}
\tau \left( \left[(z_1, \bar{m_1}),(z_2, \bar{m_2})\right] +  \left[(z_2, \bar{m_2}),(z_1, \bar{m_1})\right] \right) =\\

\tau\Big( \left( [z_1, z_2]+ [z_1, \mu (\bar{m_2})]+ [\mu(\bar{m_1}), z_2] + f(\bar{m_1}, \bar{m_2}), [\bar{m_1}, \bar{m_2}]\right)+\\
\left( [z_2, z_1]+ [z_2, \mu (\bar{m_1})]+ [\mu(\bar{m_2}), z_1] + f(\bar{m_2}, \bar{m_1}), [\bar{m_2}, \bar{m_1}]\right)\Big)=\\

 \left( \tau_2( f(\bar{m_1}, \bar{m_2})+f(\bar{m_2}, \bar{m_1})), 0\right)+
\tau \left(0, [\bar{m_1}, \bar{m_2}]+[\bar{m_2}, \bar{m_1}]\right).
\end{array}
\]
where $\mu$ is as Proposition \ref{proposition4}. On the other hand,
\[
\begin{array}{l}
\tau \left( \left[(z_1, \bar{m_1}),(z_2, \bar{m_2})\right]+\left[(z_2, \bar{m_2}),(z_1, \bar{m_1})\right] \right)= \\
C_2(\omega ((z_1, \bar{m_1})+Z_{\Lie}(\mathfrak{m}_f)), \omega((z_2, \bar{m_2})+Z_{\Lie}(\mathfrak{m}_f))=\\
C_2((0, \omega_1 (\bar{m_1}))+Z_{\Lie}(\mathfrak{n}_g), (0, \omega_1 (\bar{m_2}))+Z_{\Lie}(\mathfrak{n}_g))=\\
\Big( g(\omega_1(\bar{m_1}), \omega_1(\bar{m_2}))+g(\omega_1(\bar{m_2}), \omega_1(\bar{m_1})), [\omega_1(\bar{m_1}), \omega_1(\bar{m_2})]+[\omega_1(\bar{m_2}), \omega_1(\bar{m_1})]\Big).\\
\end{array}
\]
Let $d\left(\omega_1 ([\bar{m_1}, \bar{m_2}]+[\bar{m_2}, \bar{m_1}]) \right)$ be the first component of $\tau \left(0, [\bar{m_1}, \bar{m_2}]+[\bar{m_2}, \bar{m_1}]\right)$ where $ d: \left[\frac{\mathfrak{n}}{Z_{\Lie}(\mathfrak{n}, \mathfrak{p})}, \frac{\mathfrak{n}}{Z_{\Lie}(\mathfrak{n}, \mathfrak{p})} \right]_{\Lie}\longrightarrow Z_{\Lie}(\mathfrak{n}, \mathfrak{p})$ is a linear map. Now the isomorphism $ \tau $ yields:
\[
\begin{array}{rl}
\tau_2 \left( f(\bar{m_1}, \bar{m_2})+f(\bar{m_2}, \bar{m_1})) + d(\omega_1 ([\bar{m_1}, \bar{m_2}]+[\bar{m_2}, \bar{m_1}])\right)&=\\
g(\omega_1(\bar{m_1}), \omega_1(\bar{m_2}))+g(\omega_1(\bar{m_2}), \omega_1(\bar{m_1})).&
\end{array}
\]
Applying assumption {\it b)} and Lemma \ref{lemma4}, we conclude that
\begin{equation}\label{best}
\tau_2 \left( f(\bar{m_1}, \bar{m_2})) + d(\omega_1 ([\bar{m_1}, \bar{m_2}])\right)=
g(\omega_1(\bar{m_1}), \omega_1(\bar{m_2}))
\end{equation}

Now we define $\lambda:\mathfrak{m}_f\longrightarrow \mathfrak{n}_g$ by $\lambda(z,\bar{m})=(\tau_2 (z)+d(\omega_1(\bar{m})), {\omega}_1(\bar{m}))$.
By Lemma \ref{lemma4}, for all $ (z_i, \bar{m_i})\in \mathfrak{m}_f $, $ i=1, 2 $, we get,
\begin{align*}
\lambda([(z_1, \bar{{m}_1}]), (z_2, \bar{{m}_2})])&=
\Big(\tau_2 {\big([z_1, z_2]+[\mu(\bar{{m}_1}), z_2]+[z_1, \mu(\bar{{m}_2})]+f(\bar{{m}_1}, \bar{{m}_2})\big)}\\
&+ d(\omega_1([\bar{{m}_1}, \bar{{m}_2}])), {{\omega}_1}([\bar{{m}_1}, \bar{{m}_2}])\Big)\\
& = \left( \tau_2 {\big( f(\bar{{m}_1}, \bar{{m}_2})\big)}+d(\omega_1([\bar{{m}_1}, \bar{{m}_2}])), \omega_1([\bar{{m}_1}, \bar{{m}_2}]) \right).
\end{align*}
On the other hand,
\begin{align*}
[\lambda(z_1, \bar{{m}_1}), \lambda(z_2, \bar{{m}_2})] &=
 \Big([\tau_2 (z_1)+d(\omega_1(\bar{{m}_1})), \tau_2 (z_2)+d(\omega_1(\bar{{m}_2}))]\\
& +[\tau_2 (z_1)+d(\omega_1(\bar{{m}_1})), \nu(\omega{(\bar{{m}_2})})]\\
&+[\nu(\omega_1{(\bar{{m}_1})}), \tau_2 (z_2)+d(\tau_1(\bar{{m}_2}))]\\
&+g(\omega_{1}(\bar{m_1}), \omega_{1}(\bar{m_2})), [\omega_{1}(\bar{m_1}), \omega_{1}(\bar{m_2})]\Big)\\
& = \left( g(\omega_{1}(\bar{m_1}), \omega_{1}(\bar{m_2})),[\omega_{1}(\bar{m_1}), \omega_{1}(\bar{m_2})] \right),
\end{align*}
 where $\nu$ is as in Proposition \ref{proposition4}. It now follows from equality \ref{best}, that $ \lambda $ is a homomorphism. The following diagram
\[ \xymatrix{
0 \ar[r] & Z_{\Lie}^{\mathfrak{n}_g}\cong Z_{\Lie}(\mathfrak{n}, \mathfrak{p}) \ar[r] \ar[d]^{{\tau_2}^{-1}} & \mathfrak{m}_f\cong \mathfrak{n} \ar@{-->}[d]^{\lambda}
 \ar[r]^{\pi_1~~~~} & \dfrac{\mathfrak{m}_f}{Z_{\Lie}^{\mathfrak{n}_g}}\cong\frac{\mathfrak{n}}{Z_{\Lie}(\mathfrak{n}, \mathfrak{p})} \ar[r] \ar[d]^{{\omega_1}^{-1}} & 0 \\
0 \ar[r] & Z_{\Lie}^{\mathfrak{m}_f}\cong Z_{\Lie}(\mathfrak{m}, \mathfrak{q}) \ar[r]  & \mathfrak{n}_g\cong \mathfrak{m}  \ar[r]^{\pi_2 ~~~~} & \dfrac{\mathfrak{n}_g}{Z_{\Lie}^{\mathfrak{m}_f}}\cong\frac{\mathfrak{m}}{Z_{\Lie}(\mathfrak{m}, \mathfrak{q})} \ar[r] & 0
} \]
is commutative, where $\omega_1$ and $\tau_2$ are the above isomorphisms. Now by the Short Five Lemma \cite{RefBB}, it follows that $\lambda$ is an isomorphism.
\end{proof}
The above Theorem still holds for finite dimensional stem pair of Lie algebras if we drop the assumptions
{\it a)} and {\it b)}, see \cite{RefMP} for more details.
\begin{example}
An example of non-Lie Leibniz algebra satisfying condition {\it b)} of Theorem \ref{theorem3} is the two-dimensional Leibniz algebra with basis $\{a_1, a_2 \}$ and the bracket operation given by $[a_2, a_2] = \lambda a_1, \lambda \in \mathbb{C} \setminus \mathbb{C}^2$ \cite{RefCu}. Here the parameter $\varepsilon = 1$.

It should be mention that, in Example \ref{example3} ({\it a}), Leibniz algebra $\mathcal{A}_7$ does not satisfy in the condition {\it b)} of Theorem \ref{theorem3}. 

Note that the parameter $\varepsilon$ in condition {\it b)} of Theorem \ref{theorem3} doesn't  require to be unique, that is  every pair of elements $\bar{m}_i, \bar{m}_j$, has a parameter $\varepsilon_{ij}$ such that $[\bar{m}_i, \bar{m}_j ]=\varepsilon_{ij} [\bar{m}_j, \bar{m}_i]$. An example of Leibniz algebra satisfying this condition is the four-dimensional Leibniz algebra with basis $\{a_1, a_2, a_3, a_4\}$ with bracket operation $[a_1,a_1]=a_3, [a_2,a_4]=- [a_4,a_2]=a_2; [a_4, a_4]=- 2 a_2$ (class ${\mathcal L}_{16}$ in \cite[Proposition 3.10]{RefCaKu}). Here the parameters are $\varepsilon_{11}=1, \varepsilon_{24}=-1, \varepsilon_{44}=1$.
\end{example}

In Lie algebra theory, it is well-known that the isoclinism between two finite dimensional stem Lie algebras implies 
isomorphism. For a deeper discussion, we refer the reader to \cite{RefM}.  In the following we provide some conditions to have analogous result.
\begin{corollary}
Let $ \mathfrak{q}$ and  $\mathfrak{p}$ be {\Lie}-isoclinic {\Lie}-stem of finite dimensional complex Leibniz algebras and for all elements $ {x_1}, {x_2}\in {\mathfrak{q}}$ there exist  $ \varepsilon_{12}\in \mathbb{C} $ such that $ [{x_1}, {x_2}]={\varepsilon_{12}} [{x_2}, {x_1}]$.
 Then $  \mathfrak{q}$ and  $ \mathfrak{p}$ are isomorphic.
\end{corollary}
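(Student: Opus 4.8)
The plan is to obtain this statement as the absolute instance of Theorem \ref{theorem3}, namely the case in which the distinguished two-sided ideal coincides with the whole algebra. First I would set $\mathfrak{m} = \mathfrak{q}$ and $\mathfrak{n} = \mathfrak{p}$, forming the pairs $(\mathfrak{q}, \mathfrak{q})$ and $(\mathfrak{p}, \mathfrak{p})$. In this situation $Z_{\Lie}(\mathfrak{q}, \mathfrak{q}) = Z_{\Lie}(\mathfrak{q})$ and $[\mathfrak{q}, \mathfrak{q}]_{\Lie}$ is the ordinary {\Lie}-commutator of $\mathfrak{q}$, with the analogous identities for $\mathfrak{p}$.

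Next I would verify the three hypotheses of Theorem \ref{theorem3} for these pairs. By the Remark identifying {\Lie}-isoclinism of pairs with $\mathfrak{m}_i = \mathfrak{q}_i$ and {\Lie}-isoclinism of Leibniz algebras, the assumed {\Lie}-isoclinism $\mathfrak{q} \sim \mathfrak{p}$ is exactly a {\Lie}-isoclinism of the pairs $(\mathfrak{q}, \mathfrak{q}) \sim (\mathfrak{p}, \mathfrak{p})$. For condition a), observe that the {\Lie}-stem condition for the pair $(\mathfrak{q}, \mathfrak{q})$ reads $Z_{\Lie}(\mathfrak{q}) \subseteq [\mathfrak{q}, \mathfrak{q}]_{\Lie}$, which is precisely the assumption that $\mathfrak{q}$ (and likewise $\mathfrak{p}$) is a {\Lie}-stem Leibniz algebra. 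Condition b), requiring that every pair $m_1, m_2 \in \mathfrak{m} = \mathfrak{q}$ admit a scalar $\varepsilon_{12}$ with $[m_1, m_2] = \varepsilon_{12}[m_2, m_1]$, is verbatim the scaling hypothesis imposed on $\mathfrak{q}$.

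Finally I would apply Theorem \ref{theorem3}, whose conclusion $\mathfrak{m} \cong \mathfrak{n}$ becomes $\mathfrak{q} \cong \mathfrak{p}$, as required. I do not anticipate a genuine obstacle here, since the argument is a direct specialization; the only point needing care is the translation between {\Lie}-isoclinism of the Leibniz algebras $\mathfrak{q}, \mathfrak{p}$ and {\Lie}-isoclinism of the associated pairs, which is furnished by the aforementioned Remark. It is worth noting that Theorem \ref{theorem3} imposes condition b) only on the ideal $\mathfrak{m}$, so only the scaling hypothesis on $\mathfrak{q}$ is needed and no separate assumption on $\mathfrak{p}$ is required.
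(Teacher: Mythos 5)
Your proof is correct and is exactly the intended argument: the paper states this corollary without a separate proof, as the immediate specialization of Theorem \ref{theorem3} to the pairs $(\mathfrak{q},\mathfrak{q})$ and $(\mathfrak{p},\mathfrak{p})$, which is precisely what you carry out. Your verification of the hypotheses—using the Remark that {\Lie}-isoclinism of pairs with $\mathfrak{m}_i=\mathfrak{q}_i$ recovers {\Lie}-isoclinism of the Leibniz algebras, noting that condition \emph{a)} for these pairs is the {\Lie}-stem hypothesis on $\mathfrak{q}$ and $\mathfrak{p}$, and observing that condition \emph{b)} is imposed only on $\mathfrak{m}=\mathfrak{q}$—is complete and matches the paper's reduction.
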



\end{document}